\numberwithin{equation}{section}
\newtheorem{theorem}{Theorem}
\numberwithin{theorem}{section}
\newtheorem{lemma}{Lemma}
\numberwithin{theorem}{section} \numberwithin{lemma}{section}
\numberwithin{definition}{section}
\newtheorem{corollary}{Corollary}
\numberwithin{corollary}{section}
\newtheorem{remark}{Remark}
\numberwithin{remark}{section}
\numberwithin{proposition}{section}
\def\b{\begin{equation}}
\def\e{\end{equation}}
\newcommand{\ignore}[1]{}
\date {August 10, 2021}
\thanks{AMS Subject Classifications: 26D10, 35A23}
\keywords{}
\begin{document}
\pagenumbering{arabic} \pagenumbering{arabic}\setcounter{page}{1}
\tracingpages 1
\title{Several weighted Hardy, Hardy-Sobolev-Maz'ya and related inequalities}
\author{Ismail K\"ombe, S\"umeyye Bakim and Reyhan Tellio\u{g}lu Baleko\u{g}lu}
\dedicatory {}
\address{Ismail K\"ombe, Department of Mathematics\\ Faculty of Humanities  and Social Sciences\\ Istanbul Commerce
	University\\
	Beyoglu, Istanbul,
	Turkey}
\email{ikombe@ticaret.edu.tr}
\address{Sümeyye Bakim, Engineering Faculty\\KTO Karatay University\\ Konya, TURKEY } \email{sumeyye.bakim@karatay.edu.tr}
\address{Reyhan Tellioglu Balekoglu, Department of Mathematics\\ Faculty of Humanities  and Social Sciences\\ Istanbul Commerce
	University\\
	Beyoglu, Istanbul,
	Turkey   } \email{reyhan.tellioglu@istanbulticaret.edu.tr}

\begin{abstract}
In this paper we establish several Hardy and Hardy-Sobolev type inequalities with homogeneous  weights on the first orthant
$\displaystyle \mathbb{R}_{*}^n:=\{(x_1, \ldots, x_n):x_1>0, \ldots, x_n>0 \}$. We then use some of them to produce  Hardy type inequalities with  remainder terms. 
Furthermore, we obtain  some interpolation inequalities  and  Maz'ya type inequalities with remainder terms with the help of Maz'ya inequality and Sobolev inequality of Cabr\'e and Ros-Orton on the upper half space $\mathbb{R}_{+}^n:=\{x=(x_1, \ldots, x_n)|\, x_n>0 \}$.
\end{abstract}
\maketitle
\section{Introduction}
The classical  Hardy inequality states that
\begin{equation}\label{1.1}\int_{\mathbb{R}^n}
|\nabla\phi|^p dx \ge \left| \frac{n-p}{p}\right|^p \int_{\mathbb{R}^n}
\frac{{|\phi|}^p}{|x|^p} dx, \end{equation}
and holds for all $\phi\in C_0^{\infty}(\mathbb{R}^n)$ if $1<p<n$, and for all $\phi \in   C_0^{\infty}(\mathbb{R}^n\setminus \{0\})$ if $p>n$. The constant $|\frac{n-p}{p}|^p$ in the above inequality is sharp and is never achieved by nontrivial functions.
In the critical case $p=n$, the inequality $(\ref{1.1})$ fails for any constant. However, there is another version of the Hardy inequality slightly different from $(\ref{1.1})$,
\begin{equation}\label{1.2}
\int_{B_1} |\nabla\phi|^p dx\ge
\left(\frac{p-1}{p}\right)^p\int_{B_1}\frac{|\phi|^p}{|x|^p\left(\ln(\frac{1}{|x|})\right)^p}dx,
\end{equation} which is essentially proved by Leray \cite{Leray} in $\mathbb{R}^2$ and later by Adimurthi and Sandeep \cite{Adimurthi-Sandeep} in $\mathbb{R}^n$, $n\ge 3$. Here $B_1$ is the unit ball in $\mathbb{R}^n$  centered at the origin and $\phi\in C_0^{\infty}(B_1\setminus\{0\})$, and the constant $(\frac{p-1}{p})^p$ is sharp.

Another inequality involving non-radial weight is the  Maz'ya  inequality \cite{Mazya},

\begin{equation}\label{1.3}
\int_{\mathbb{R}^n} |x_n|^{p-1} |\nabla \phi|^p dx \geq \frac{1}{(2p)^p}\int_{\mathbb{R}^n} \frac{|\phi|^p}{(x_{n-1}^2+x_n^2)^{\frac{1}{2}}} dx,
\end{equation}
where   $\phi \in C_0^{\infty}(\mathbb{R}^n)$ and $1<p<\infty$.

These inequalities play important roles in many areas such as analysis,  geometry and  mathematical physics. By virtue of aforementioned reasons, there is a vast amount of literature on these  inequalities together with their generalizations, extensions, and refinements.  We refer the interested reader to the monographs \cite{Kufner}, \cite{Davies}, \cite{Moradifam}, \cite{Lewis}, \cite{Ruzhansky}, the papers \cite{FMT}, \cite{Rupert}, \cite{Kombe-Ozaydin}, \cite{Goldstein-Kombe-Yener}, \cite{Glu}, \cite{GGKB-2}, and the references cited therein.

Before we state our results, let us first briefly recall the related results in the literature. For instance, Davies \cite{Davies} proved the following extension of Hardy's inequality  for convex
domains  $ \Omega \subset \mathbb{R}^n$, $n\ge 2$,
\begin{equation}\label{1.4}
\int_{\Omega} |\nabla \phi|^2 dx\ge
\frac{1}{4}\int_{\Omega}\frac{|\phi|^2}{\delta^2}dx,
\end{equation}
where $\phi \in C_{0}^{\infty }\left( \Omega \right)$.  Here
$\delta\left(x\right) = \mathrm{dist}\, \left(x,\partial\Omega\right) = \min_{y\in\partial\Omega}
\left\vert x-y\right\vert$
is the distance function to the boundary $\partial \Omega$. Moreover the constant $\frac{1}{4}$ is sharp (see \cite{Matskewich}).  When $\Omega$ is the upper half-space $\mathbb{R}_{+}^n$, the inequality \eqref{1.4} takes the form
 \begin{equation}\label{1.5}
\int_{\mathbb{R}_{+}^n} |\nabla \phi|^2 dx\ge
\frac{1}{4}\int_{\mathbb{R}_{+}^n}\frac{|\phi|^2}{x_n^2}dx.
\end{equation}

An improved version of the inequality \eqref{1.5}, which has been obtained by Maz'ya much earlier in his book \cite{Mazya},

\begin{equation}\label{1.6}
\int_{\mathbb{R}_{+}^n} |\nabla \phi|^2 dx \geq \frac{1}{4}\int_{\mathbb{R}_{+}^n} \frac{|\phi|^2}{x_{n}^2} dx+ C_n\left(\int_{\mathbb{R}_{+}^n} x_n^{\gamma}|\phi|^{p}  dx\right)^{\frac{2}{p}},
\end{equation}
where $\phi \in C_{0}^{\infty }( \mathbb{R}_{+}^n)$, $n\ge 3$, $2<p\le \frac{2n}{n-2}$, $\gamma=\frac{(n-2)p}{2}-n$  and $C_n$ is a positive constant. If $p=\frac{2n}{n-2}$ then  we have the following celebrated  Hardy-Sobolev-Maz'ya inequality

\begin{equation}\label{1.7}
\int_{\mathbb{R}_{+}^n} |\nabla \phi|^2 dx \geq \frac{1}{4}\int_{\mathbb{R}_{+}^n} \frac{|\phi|^2}{x_{n}^2} dx+ C_n\left(\int_{\mathbb{R}_{+}^n} |\phi|^{\frac{2n}{n-2}}  dx\right)^{\frac{n-2}{n}}.
\end{equation}

On the other hand,  substituting  $\phi(x)=x_n^{-1/2} \varphi(x)$ into (\ref{1.3}) gives another improved version of \eqref{1.5}
\begin{equation}\label{1.8}
\int_{\mathbb{R}_{+}^n} |\nabla \varphi|^2 dx \geq \frac{1}{4}\int_{\mathbb{R}_{+}^n} \frac{|\varphi|^2}{x_{n}^2} dx+ \frac{1}{16}\int_{\mathbb{R}_{+}^n} \frac{|\varphi|^2}{(x_{n-1}^2+x_n^2)^{\frac{1}{2}} x_n} dx,
\end{equation}
where $\varphi \in C_0^{\infty}(\mathbb{R}_{+}^n)$.

Regarding  the $L^2$ improved versions of \eqref{1.1} for a bounded domain $\Omega$, we have the following  sharp Hardy-Poincar\'{e} and Hardy-Sobolev inequalities, which  have been proved by Brezis and V\'azquez \cite{BV}
\begin{equation}\label{1.9}
\int _{\Omega} |\nabla \phi|^2dx\ge
\left(\frac{n-2}{2}\right)^2\int_{\Omega}
\frac{|\phi|^2}{|x|^2}dx+\mu\left(\frac{\omega_n}{|\Omega|}\right)^{2/n}\int
_{\Omega} |\phi|^2dx,
\end{equation}
and
\begin{equation}\label{1.10}
\int _{\Omega} |\nabla \phi|^2dx\ge
\left(\frac{n-2}{2}\right)^2\int_{\Omega}
\frac{|\phi|^2}{|x|^2}dx+C\left(\int _{\Omega}
|\phi|^qdx\right)^{\frac{2}{q}},
\end{equation}
where $\phi\in C_0^{1} (\Omega)$, $C=C(\Omega, n)>0$,  $\omega_n$
and $|\Omega|$ denote the $n$-dimensional Lebesgue measure of the
unit ball $B\subset \mathbb{R}^n$  and the domain $\Omega$,
respectively. Here $\mu$ is the first eigenvalue of the Laplace
operator in the two dimensional unit disk, and it is optimal when
$\Omega$ is a ball centered at the origin,
$2\le q< \frac{2n}{n-2}$ and $q$ cannot be replaced by
$\frac{2n}{n-2}$.

In a series of papers \cite{Tidblom2}-\cite{Tidblom}  Tidblom studied various Hardy type inequalities with remainder terms and proved the following inequality in \cite{Tidblom2},
\begin{equation}\label{1.12}
\int_{\mathbb{R}_{*}^{n}} |\nabla \phi |^{2}dx \ge \frac{1}{4} \int _{\mathbb{R}_{*}^{n}} \frac{|\phi|^2}{|x|^2}+\left(\frac{2n-1}{4n^2}\right)  \int_{\mathbb{R}_{*}^{n}}  \left(\frac{1}{x_1^2}+\frac{1}{x_2^2}+...+\frac{1}{x_n^2}\right )|\phi|^{2}dx,
\end{equation}
where  $\phi \in C_0^{\infty} (\mathbb{R}_{*}^{n})$. Moreover, Tidblom asked whether the inequality
\begin{equation}\label{1.13}
\int_{\Omega} |\nabla \phi|^p dx \geq C  \int_{\Omega} \left(\frac{1}{x_1^p}+\frac{1}{x_2^p}\right) |\phi|^p dx,
\end{equation}
holds for some positive constant $C$. In \cite{Tidblom}, Tidblom  obtained the following inequality, which is the $L^p$ analogue of  Maz'ya's inequality \eqref{1.8},
\begin{equation}\label{1.11}
\int_{\mathbb{R}_{+}^n} |\nabla \phi|^p dx \geq \left(\frac{p-1}{p}\right)^p  \int_{\mathbb{R}_{+}^n} \frac{|\phi|^p}{x_n^p}dx + \alpha(p,\tau)\int_{\mathbb{R}_{+}^n} \frac{|\phi|^p}{x_n^{p-\tau}(x_{n-1}^2+x_n^2)^{\frac{\tau}{2}}} dx,
\end{equation}
where $p>1$, $\tau > 0$, $\alpha (p,\tau)$ is a positive constant and $\phi \in C_0^{\infty} (\mathbb{R}_{+}^n)$. 

On the other hand, Filippas, Tertikas and Tidblom \cite{Tertikas-Filippas-Tidblom} proved the following  inequality which has a critical Sobolev exponent,
\begin{equation}\label{1.14}
\int_{\mathbb{R}_{k_+}^n} |\nabla \phi|^2 dx\geq\frac{1}{4} \int_{\mathbb{R}_{k_+}^n} \left(\frac{1}{x_1^2}+\dots+\frac{1}{x_k^2}\right) |\phi|^2 dx + C\left( \int_{\mathbb{R}_{k_+}^n} |\phi|^{\frac{2n}{n-2}} dx\right)^{\frac{n-2}{n}},
\end{equation}
where $\mathbb{R}_{k_+}^n=\{x=(x_1,\ldots, x_n)|\, x_1>0, \ldots, x_k>0\} \subset \mathbb{R}^n$, $\phi\in C_0^{\infty}(\mathbb{R}_{k_+}^n)$ and $C$ is a positive constant.

Recently, Cabr\'e and Ros-Orton \cite{CabreRosOrton} obtained a sharp form  of the Sobolev inequality with monomial weight $x^A$, where $A=(A_1, \dots, A_n)$ and  \[x^A:=|x_1|^{A_1}\cdots |x_n|^{A_n}, \quad A_1\ge 0,\dots, A_n\ge 0.\]
More precisely, the following inequality  holds for all $\phi\in C_0^1(\mathbb{R}_*^n)$
\begin{equation}\label{1.15}
\left(\int_{\mathbb{R}_*^n}x^A|\nabla \phi|^pdx\right)^{\frac{1}{p}} \ge S_p(D) \left(\int_{\mathbb{R}_*^n}x^A|\phi|^{p^{*}}dx\right)^{\frac{1}{p^{*}}},
\end{equation}
where  $\mathbb{R}_*^n=\{x=(x_1,\dots,x_n) \in \mathbb{R}^n : x_i > 0 \quad \text{when} \quad A_i \ge 0\}$,  $D=n+A_1+\dots+A_n$, $1<p<D$,  $p^{*}=\frac{Dp}{D-p}$, the  constant $S_p(D)>0$ is sharp (its explicit value can be found in \cite{CabreRosOrton}) and the equality
 holds if and only if $\phi_{a,b}(x)=\left(a+b|x|^{\frac{p}{p-1}}\right)^{1-\frac{D}{p}}$,  where $a$ and $b$ are any positive constants.

Motivated by the above results, we have three main purposes in this paper. The first one  is  to study weighted Hardy type inequalities  on the first orthant \[\mathbb{R}_{*}^n:=\{x=(x_1, \ldots, x_n)|\, x_1>0, \ldots, x_n>0 \}.\]  The second purpose is to obtain various explicit remainder terms for Hardy type inequalities on some subdomains of $\mathbb{R}_{*}^n$. The last purpose is to study Maz'ya-Sobolev inequality and Maz'ya  type inequalities with remainder terms in the upper half-space \[\mathbb{R}_{+}^n:=\{x=(x_1, \ldots, x_n)|\, x_n>0 \}.\]

\section{Weighted Hardy and Poincar\'e Type Inequalities}\label{Section2}
Throughout this section and the next section, we assume that  $\Omega$ is a  bounded domain with smooth boundary $\partial \Omega$ in  $\mathbb{R}_{*}^n:=\{x=(x_1, \ldots, x_n)|\, x_1>0, \ldots, x_n>0 \}$, $n\ge 2$. The generic point is $x=\left( x_{1},\ldots ,x_{n}\right) \in
\mathbb{R}^{n}$ and $r=\left\vert x\right\vert =\sqrt{x_{1}^{2}+\cdots +x_{n}^{2}}.$  We  denote   $B_{R}=\left\{ x\in
\mathbb{R}
^{n}:\left\vert x\right\vert <R\right\} $ is the open ball at the origin with radius $R$ in $\mathbb{R}^n$.

We begin this section by proving a Hardy type inequality with the homogeneous weight $w(x_1,\dots,x_n)=x_1^{\alpha}\cdots x_n^{\alpha}$ where  $\alpha \in \mathbb{R}$. In the special case where  $\alpha=0$, our result gives  an  affirmative answer to Tidblom's question (\ref{1.13}).
\smallskip
\begin{theorem}\label{Theorem 2.1}
Let $\phi \in C_0^{\infty}(\Omega)$, $p>2$, $\alpha \in \mathbb{R}$ and $p-\alpha-1>0$.   Then we have,
\begin{equation}\label{2.1}
\int_{\Omega}(x_1\dots x_n)^{\alpha} |\nabla \phi |^{p} dx \ge \left(\frac{p-\alpha-1}{p}\right)^p \int_{\Omega} (x_1 \dots x_n)^{\alpha}\left(\frac{1}{x_1^p}+\dots+\frac{1}{x_n^p}\right )|\phi|^{p} dx.
\end{equation}

\end{theorem}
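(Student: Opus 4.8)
The plan is to reduce the full $n$-dimensional inequality to a family of one-dimensional weighted Hardy inequalities, one for each coordinate direction, and then recombine them with an elementary norm comparison. Writing $w = (x_1\cdots x_n)^{\alpha}$, the first step is to establish, for each fixed index $i\in\{1,\dots,n\}$, the directional estimate
\[
\int_{\Omega} w\,|\partial_i \phi|^p\,dx \ge \left(\frac{p-\alpha-1}{p}\right)^p \int_{\Omega} w\,\frac{|\phi|^p}{x_i^p}\,dx
\]
via the vector field (divergence) method. Concretely, I would test against the field $T_i = -\lambda\,w\,x_i^{-(p-1)}$ (with a parameter $\lambda>0$) pointing in the $x_i$-direction and integrate by parts in $x_i$; the boundary terms vanish because $\operatorname{supp}\phi$ is a compact subset of the open orthant, where $w$ and $x_i^{-(p-1)}$ are smooth. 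This yields $\int_{\Omega}(\partial_i T_i)|\phi|^p\,dx = -\int_{\Omega} p\,T_i\,|\phi|^{p-2}\phi\,\partial_i\phi\,dx$, and a direct computation gives $\partial_i T_i = \lambda(p-\alpha-1)\,w\,x_i^{-p}$.

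Next I would bound the right-hand side using Young's inequality $ab \le \frac{1}{p}a^p + \frac{p-1}{p}b^{p/(p-1)}$, applied to $a = w^{1/p}|\partial_i\phi|$ and $b = w^{-1/p}|T_i|\,|\phi|^{p-1}$ (after pulling out the factor $p$). This produces
\[
\int_{\Omega}(\partial_i T_i)|\phi|^p\,dx \le \int_{\Omega} w\,|\partial_i\phi|^p\,dx + (p-1)\int_{\Omega}\frac{|T_i|^{p/(p-1)}}{w^{1/(p-1)}}\,|\phi|^p\,dx .
\]
Since both $\partial_i T_i$ and $|T_i|^{p/(p-1)}w^{-1/(p-1)}$ are explicit multiples of $w\,x_i^{-p}$, rearranging leaves the coefficient $\lambda(p-\alpha-1) - (p-1)\lambda^{p/(p-1)}$ in front of $\int_{\Omega} w\,x_i^{-p}|\phi|^p\,dx$. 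Maximizing over $\lambda>0$ gives the optimum at $\lambda = \left(\frac{p-\alpha-1}{p}\right)^{p-1}$, with maximal value exactly $\left(\frac{p-\alpha-1}{p}\right)^p$; here the hypothesis $p-\alpha-1>0$ is what makes this constant positive and the optimization meaningful.

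Finally, I would sum the $n$ directional inequalities and invoke the elementary comparison $\sum_{i=1}^n |\partial_i\phi|^p \le \left(\sum_{i=1}^n |\partial_i\phi|^2\right)^{p/2} = |\nabla\phi|^p$, which holds precisely because the $\ell^p$ norm is dominated by the $\ell^2$ norm when $p\ge 2$. This bounds $\sum_i \int_{\Omega} w\,|\partial_i\phi|^p\,dx$ above by $\int_{\Omega} w\,|\nabla\phi|^p\,dx$ and assembles the lower bounds into $\left(\frac{p-\alpha-1}{p}\right)^p\int_{\Omega} w\,\big(\sum_i x_i^{-p}\big)|\phi|^p\,dx$, yielding \eqref{2.1}. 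I expect the only genuinely structural point to be this last norm interchange: the vector-field-plus-Young computation is routine and self-optimizing through the choice of $\lambda$, but it is exactly the requirement $p\ge 2$ in the $\ell^p$–$\ell^2$ comparison that dictates the range of $p$ in the hypothesis.
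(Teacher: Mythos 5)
Your argument is correct, and it reaches the constant $\left(\frac{p-\alpha-1}{p}\right)^p$ by a route that is recognizably the same in spirit (a divergence/vector-field identity, Young's inequality, and optimization of a free parameter) but organized differently from the paper's. The paper works globally: it tests $|\phi|^p(x_1\cdots x_n)^{\alpha}$ against the single field $\mathbf{F}=(x_1^{1-p},\dots,x_n^{1-p})$, applies Cauchy--Schwarz and Young to $\nabla\phi\cdot\mathbf{F}$ all at once, and then needs the pointwise comparison $|\mathbf{F}|^{p/(p-1)}\le \sum_i x_i^{-p}$, which is where $p\ge 2$ enters. You instead decompose into $n$ one-dimensional problems, proving for each $i$ the directional inequality $\int_{\Omega}w\,|\partial_i\phi|^p\,dx\ge\left(\frac{p-\alpha-1}{p}\right)^p\int_{\Omega}w\,x_i^{-p}|\phi|^p\,dx$, and only then recombine via $\sum_i|\partial_i\phi|^p\le|\nabla\phi|^p$; so in your version $p\ge 2$ enters at the recombination step rather than in a bound on the field. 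The two norm comparisons are in a sense dual ($\ell^q\le\ell^1$ on the weights versus $\ell^p\le\ell^2$ on the gradient), and both yield the same final constant. Your decomposition has the small advantage of isolating the directional inequalities, which hold for every $p>1$ and are of independent interest; the paper's single-field formulation is more compact and is the template reused verbatim for Theorems 2.3 and 2.4, where the analogous fields are less naturally split coordinate by coordinate. Your computations (the value of $\partial_iT_i$, the Young step, and the optimizer $\lambda=\left(\frac{p-\alpha-1}{p}\right)^{p-1}$) all check out.
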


\begin{proof} Consider the  vector field   \[\textbf{F}=(x_1^{1-p},x_2^{1-p},\dots,x_n^{1-p}).\]
Direct computation shows that
\begin{equation}\label{2.2}
\text{div} \, \textbf{F}=(1-p)\left(\frac{1}{x_1^p}+\frac{1}{x_2^p}+\dots+\frac{1}{x_n^p}\right ).
\end{equation}
Multiplying  both sides of (\ref{2.2}) by  $(x_1\dots x_n)^{\alpha}|\phi|^p$, and integrating by parts and rearranging terms in a standard way we get
\begin{equation} \label {2.3} \frac{p-1-\alpha}{p}\int_{\Omega} \left(\frac{1}{x_1^p}+\dots+\frac{1}{x_n^p}\right ) (x_1\dots x_n)^{\alpha}|\phi|^p dx=\int_{\Omega} (x_1\dots x_n)^{\alpha}|\phi|^{p-1} \nabla \phi\cdot \textbf{F} dx. \end{equation}
Denote by $L$ and $R$ the left-hand side and the right-hand side of $\eqref{2.3}$, respectively. An application of H\"older's and Young's inequality yields,

\begin{equation}\label{2.4}  L=R \leq  \epsilon \int_{\Omega} (x_1\dots x_n)^{\alpha}|\nabla \phi|^p dx+ (\frac{p-1}{p})(p\epsilon)^{\frac{1}{1-p}}\int_{\Omega} (x_1\dots x_n)^{\alpha}|\textbf{F}|^{\frac{p}{p-1}} |\phi|^{p}dx\end{equation}
where $\epsilon>0$  and will be chosen later. On the other hand, one can show that

\begin{equation}\label{2.5}
|\textbf{F}|^{\frac{p}{p-1}} \le \frac{1}{x_1^p}+ \frac{1}{x_2^p}+\dots+\frac{1}{x_n^p}.
\end{equation} Substituting (\ref{2.5}) into (\ref{2.4}) and rearranging the terms gives

\[H_{p,\alpha}(\epsilon)\int_{\Omega}(x_1\dots x_n)^{\alpha}\left(\frac{1}{x_1^p}+\frac{1}{x_2^p}+\dots+\frac{1}{x_n^p}\right )|\phi|^p dx
\leq \int_{\Omega} (x_1\dots x_n)^{\alpha}|\nabla \phi|^p dx,\]
where $H_{p,\alpha}(\epsilon)=  \frac{p-1}{p}\left( \frac{1-(p\epsilon)^{\frac{1}{1-p}}}{\epsilon}\right)-\frac{\alpha}{p\epsilon}.$
Note that the function $H_{p,\alpha}(\epsilon)$  attains the maximum for $\epsilon=\frac{1}{p}\left( \frac{p-\alpha-1}{p}\right)^{1-p}$, and this maximum is equal to $(\frac{p-\alpha-1}{p})^{p} $. Now we have the desired
inequality (\ref{2.1}),
\[
\int_{\Omega}(x_1\dots x_n)^{\alpha} |\nabla \phi |^{p} dx \ge \left(\frac{p-\alpha-1}{p}\right)^p \int_{\Omega} (x_1 \dots x_n)^{\alpha}\left(\frac{1}{x_1^p}+\dots+\frac{1}{x_n^p}\right )|\phi|^{p} dx.
\]
\end{proof}
\smallskip

We now present the following weighted Hardy-Sobolev inequality that can be obtained by H\"older interpolation between the Hardy type inequality \eqref{2.1}  and the Sobolev inequality \eqref{1.15} with $A_1=\dots =A_N=\alpha$.
\begin{corollary}
	Let  $2<p<n(1+\alpha)$, $0\le \alpha <p-1$, $0\le s\le p$ and  $p^{*}(s)=p\left(\frac{D-s}{D-p}\right)$. Then the following inequality holds,
	\begin{equation}\label{2.6}
	\left(\int_{\Omega}(x_1\dots x_n)^{\alpha} |\nabla \phi |^{p} dx\right)^{\frac{1}{p}} \ge C\left(\int_{\Omega} (x_1 \dots x_n)^{\alpha}\left(\frac{1}{x_1^p}+\dots+\frac{1}{x_n^p}\right )^{\frac{s}{p}}|\phi|^{p^{*}(s)} dx\right)^{\frac{1}{p^{*}(s)}},
	\end{equation}
for all $\phi \in C_0^{\infty}(\Omega)$. Here $C=\left(\frac{p-\alpha-1}{p}\right)^{\frac{s(D-p)}{p(D-s)}}\left( S_p(D)\right )^{\frac{D(p-s)}{p(D-s)}}$ and  $D=n(1+\alpha)$.
	\end{corollary}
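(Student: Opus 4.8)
The plan is a straightforward Hölder interpolation between the two endpoint inequalities already at our disposal, so the only genuine work is the bookkeeping of exponents. Since $\phi\in C_0^{\infty}(\Omega)$ vanishes outside a compact subset of $\Omega$, extending by zero we may regard $\phi$ as an admissible test function in both \eqref{2.1} and \eqref{1.15}; moreover all integrals over $\Omega$ coincide with the corresponding integrals over $\mathbb{R}_*^n$. Thus both the Hardy inequality \eqref{2.1} of Theorem \ref{Theorem 2.1} and the Cabr\'e--Ros-Orton Sobolev inequality \eqref{1.15}, applied with $A_1=\dots=A_n=\alpha$ so that $D=n(1+\alpha)$ and $p^{*}=\frac{Dp}{D-p}$, are available. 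The standing hypotheses are exactly what is needed: $0\le\alpha<p-1$ makes the Hardy constant $\bigl(\frac{p-\alpha-1}{p}\bigr)^{p}$ positive, $\alpha\ge 0$ gives admissible monomial exponents in \eqref{1.15}, and $2<p<n(1+\alpha)=D$ guarantees $p<D$ so that \eqref{1.15} applies.

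Write $w=(x_1\cdots x_n)^{\alpha}$ and $V=\frac{1}{x_1^{p}}+\dots+\frac{1}{x_n^{p}}$, and set $I=\int_{\Omega}w\,|\nabla\phi|^{p}\,dx$. First I would split the target integrand pointwise as
\[
w\,V^{s/p}\,|\phi|^{p^{*}(s)} = \bigl(w\,V\,|\phi|^{p}\bigr)^{\theta}\,\bigl(w\,|\phi|^{p^{*}}\bigr)^{1-\theta},\qquad \theta=\tfrac{s}{p},
\]
and apply H\"older's inequality with conjugate exponents $\frac{p}{s}$ and $\frac{p}{p-s}$ (legitimate since $0\le s\le p$). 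The exponent $\theta=s/p$ is dictated by matching the power of $V$, and one then verifies that the resulting power of $|\phi|$ on the right, namely $p\theta+p^{*}(1-\theta)$, equals $p^{*}(s)$, using $p^{*}(s)=\frac{p(D-s)}{D-p}$. This yields
\[
\int_{\Omega} w\,V^{s/p}\,|\phi|^{p^{*}(s)}\,dx \le \Bigl(\int_{\Omega} w\,V\,|\phi|^{p}\,dx\Bigr)^{s/p}\Bigl(\int_{\Omega} w\,|\phi|^{p^{*}}\,dx\Bigr)^{1-s/p}.
\]

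Next I would bound the two factors by the endpoint inequalities: \eqref{2.1} gives $\int_{\Omega} wV|\phi|^{p}\,dx\le\bigl(\frac{p}{p-\alpha-1}\bigr)^{p}\,I$, while \eqref{1.15} gives $\int_{\Omega} w|\phi|^{p^{*}}\,dx\le S_p(D)^{-p^{*}}\,I^{p^{*}/p}$. Substituting these, the total power of $I$ appearing on the right is $\frac{s}{p}+\frac{p^{*}}{p}\bigl(1-\frac{s}{p}\bigr)$, which simplifies to $\frac{D-s}{D-p}=\frac{p^{*}(s)}{p}$; raising the inequality to the power $\frac{1}{p^{*}(s)}$ therefore leaves exactly $I^{1/p}$ and produces the constant $C=\bigl(\frac{p-\alpha-1}{p}\bigr)^{s/p^{*}(s)}\,S_p(D)^{p^{*}(p-s)/(p\,p^{*}(s))}$. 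A final rewriting of the exponents using $p^{*}(s)=\frac{p(D-s)}{D-p}$ and $p^{*}=\frac{Dp}{D-p}$ turns them into $\frac{s(D-p)}{p(D-s)}$ and $\frac{D(p-s)}{p(D-s)}$, matching the stated $C$. The only point demanding care --- and where an error would most likely creep in --- is precisely this exponent arithmetic: one must confirm both that $\theta=s/p$ makes the $|\phi|$-exponent equal to $p^{*}(s)$ and that the accumulated power of $I$ is exactly $p^{*}(s)/p$, since these two coincidences are what make the interpolation close with the advertised constant. As a sanity check, $s=0$ returns \eqref{1.15} with $C=S_p(D)$, and $s=p$ returns \eqref{2.1} with $C=\frac{p-\alpha-1}{p}$.
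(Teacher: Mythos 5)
Your proposal is correct and follows exactly the route the paper indicates: Hölder interpolation with conjugate exponents $p/s$ and $p/(p-s)$ between the Hardy inequality \eqref{2.1} and the Cabr\'e--Ros-Orton inequality \eqref{1.15} with $A_1=\dots=A_n=\alpha$ (the paper states the corollary without writing out the details, and your exponent bookkeeping, including the final constant, checks out).
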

When $s =0$, then \eqref{2.6} is the  Sobolev inequality \eqref{1.15} with $A_1=\dots =A_N=\alpha$, whereas for $s = p$, we get the Hardy type inequality \eqref{2.1}.
\smallskip

Using the inequality (\ref{2.1})   and the arithmetic-geometric mean inequality, we obtain the following weighted  Hardy type inequality.

\begin{theorem}\label{Theorem 2.2}
	 Let $\phi \in C_0^{\infty}(\Omega)$, $p>2$ , $\alpha\in \mathbb{R}$ and  $p-\alpha-1>0$. Then  we have,

\begin{equation}\label{2.7}
\int_{\Omega}\left(x_1\dots x_n\right)^{\alpha}|\nabla\phi|^pdx \geq n \left(\frac{p-\alpha-1}{p}\right)^p  \int_{\Omega}\left(x_1\dots x_n\right)^{\alpha-\frac{p}{n}}|\phi|^pdx.
\end{equation}
\end{theorem}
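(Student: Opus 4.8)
The plan is to deduce \eqref{2.7} directly from the Hardy-type inequality \eqref{2.1} by bounding the weight $\frac{1}{x_1^p}+\dots+\frac{1}{x_n^p}$ from below. Since $\Omega\subset\mathbb{R}_*^n$, every coordinate satisfies $x_i>0$, so each term $1/x_i^p$ is strictly positive and the arithmetic-geometric mean inequality applies cleanly to the $n$ positive numbers $1/x_1^p,\dots,1/x_n^p$. This is exactly the pair of ingredients flagged in the statement.

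First I would invoke Theorem \ref{Theorem 2.1}, whose hypotheses ($p>2$, $\alpha\in\mathbb{R}$, $p-\alpha-1>0$) are identical to those assumed here, to obtain
\[
\int_{\Omega}(x_1\dots x_n)^{\alpha}|\nabla\phi|^p\,dx \ge \left(\frac{p-\alpha-1}{p}\right)^p\int_{\Omega}(x_1\dots x_n)^{\alpha}\left(\frac{1}{x_1^p}+\dots+\frac{1}{x_n^p}\right)|\phi|^p\,dx.
\]
Next, the arithmetic-geometric mean inequality yields the pointwise bound
\[
\frac{1}{x_1^p}+\dots+\frac{1}{x_n^p}\ge n\left(\frac{1}{x_1^p}\cdots\frac{1}{x_n^p}\right)^{\frac{1}{n}}=n\,(x_1\dots x_n)^{-\frac{p}{n}},
\]
valid for all $x\in\Omega$. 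Substituting this lower bound into the integrand on the right-hand side above and combining the powers of the weight, $(x_1\dots x_n)^{\alpha}\cdot(x_1\dots x_n)^{-p/n}=(x_1\dots x_n)^{\alpha-p/n}$, produces precisely the claimed inequality \eqref{2.7}, with constant $n\left(\frac{p-\alpha-1}{p}\right)^p$ arising from the factor $n$ in the AM-GM step together with the Hardy constant of Theorem \ref{Theorem 2.1}.

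I do not anticipate a genuine obstacle here: the argument is a one-line estimate on top of \eqref{2.1}. The only things that require care are confirming that strict positivity of all coordinates on the first orthant legitimizes both the reciprocal weights and the AM-GM application, and tracking the exponent bookkeeping so that the final weight is $(x_1\dots x_n)^{\alpha-\frac{p}{n}}$ rather than something off by a sign or a factor of $n$. One may also note that equality in AM-GM forces $x_1=\dots=x_n$, so the passage from \eqref{2.1} to \eqref{2.7} is generally strict and the resulting constant is not expected to be sharp in \eqref{2.7}, even though the constant inherited from Theorem \ref{Theorem 2.1} is itself optimal there.
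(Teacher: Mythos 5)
Your argument is correct and is essentially the paper's own proof: both combine Theorem \ref{Theorem 2.1} with the arithmetic--geometric mean bound $\frac{1}{x_1^p}+\dots+\frac{1}{x_n^p}\ge n(x_1\dots x_n)^{-p/n}$, differing only in the (immaterial) order in which the two steps are applied. No gaps.
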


\begin{proof}
	
The inequality between the arithmetic and geometric means of $n$ positive numbers $x_i^{-p}$, $i=1,2,\dots,n$ may be written as
\begin{equation}\label{2.8}
\frac{n}{(x_1x_2\dots x_n)^{p/n}} \le \frac{1}{x_1^{p}}+\frac{1}{x_2^{p}}+\dots+\frac{1}{x_n^{p}}.
\end{equation}	
Multiplying both sides of (\ref{2.8}) by $\left(x_1\dots x_n\right)^{\alpha}|\phi|^p$  and integrating, we obtain
	
\begin{equation}\label{2.9}
n\int_{\Omega} \frac{|\phi|^p}{(x_1x_2\dots x_n)^{\frac{p}{n}-\alpha}}dx \le \int_{\Omega}\left(x_1\dots x_n\right)^{\alpha}\left(\frac{1}{x_1^{p}}+\frac{1}{x_2^{p}}+\dots+\frac{1}{x_n^{p}}\right) |\phi|^p dx.
\end{equation}		
By applying the inequality (\ref{2.1}) in Theorem $\ref{Theorem 2.1}$  to the right hand side of (\ref{2.9}), it follows that
	
\begin{equation}\label{2.10}
\int_{\Omega}\left(x_1\dots x_n\right)^{\alpha}|\nabla\phi|^pdx \geq n \left(\frac{p-\alpha-1}{p}\right)^p  \int_{\Omega}\left(x_1\dots x_n\right)^{\alpha-\frac{p}{n}}|\phi|^pdx.
\end{equation}
The proof of Theorem $\ref{Theorem 2.2}$  is now complete.
\end{proof}	
\smallskip

We continue to find other weighted inequalities, and we will achieve this by selecting some special vector fields. Our results are as follows.

\begin{theorem}\label{Theorem 2.3}
Let $p>1$. Then for all $\phi \in C_0^{\infty}(\Omega)$, we have

\begin{equation}\label{2.11}	
\int_{\Omega} |x|^p(x_1x_2 \dots x_n) |\nabla \phi|^p dx
\ge \left(\frac{2n}{p}\right)^p \int_{\Omega}
(x_1x_2\dots x_n)|\phi|^pdx.	\end{equation}
Moreover, for every $\phi \in C_0^{\infty} (B_R \cap \mathbb{R}_*^n)$
\begin{equation}\label{2.12}\int_{B_R \cap \mathbb{R}_*^n} (x_1 x_2 \dots x_n)|\nabla \phi |^{p}dx \ge \left(\frac{2n}{pR}\right)^p \int_{B_R \cap \mathbb{R}_*^n} (x_1 x_2 \dots x_n)|\phi|^pdx.
\end{equation}
	
\end{theorem}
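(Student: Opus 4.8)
The plan is to follow the divergence (vector field) method used in the proof of Theorem \ref{Theorem 2.1}, but with a field tailored to the weight $|x|^p(x_1\cdots x_n)$. The key step is to guess the field
\[
\textbf{F}=(x_1 x_2\cdots x_n)\,(x_1,x_2,\dots,x_n),
\]
that is $F_i=(x_1\cdots x_n)\,x_i$. Since $F_i=x_i^2\prod_{j\neq i}x_j$, a direct computation gives $\partial_{x_i}F_i=2(x_1\cdots x_n)$ for each $i$, and hence
\[
\text{div}\,\textbf{F}=2n\,(x_1\cdots x_n).
\]
This is really where the whole difficulty sits: once the field is found, everything downstream is forced, and no optimization over a free parameter is needed.

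Next I would multiply $\text{div}\,\textbf{F}=2n(x_1\cdots x_n)$ by $|\phi|^p$ and integrate by parts over $\Omega$. Because $\phi$ is compactly supported the boundary contribution vanishes, and using $\nabla(|\phi|^p)=p|\phi|^{p-1}\text{sgn}(\phi)\nabla\phi$ we obtain
\[
2n\int_{\Omega}(x_1\cdots x_n)|\phi|^p\,dx=-p\int_{\Omega}|\phi|^{p-1}\text{sgn}(\phi)\,\textbf{F}\cdot\nabla\phi\,dx.
\]
Taking absolute values, bounding $|\textbf{F}\cdot\nabla\phi|\le|\textbf{F}|\,|\nabla\phi|$, and using the crucial fact that on $\mathbb{R}_*^n$ all coordinates are positive, so that $|\textbf{F}|=(x_1\cdots x_n)\,|x|$, I am led to
\[
2n\int_{\Omega}(x_1\cdots x_n)|\phi|^p\,dx\le p\int_{\Omega}(x_1\cdots x_n)\,|x|\,|\nabla\phi|\,|\phi|^{p-1}\,dx.
\]
Then I would apply H\"older's inequality with exponents $p$ and $p/(p-1)$, splitting the weight as $(x_1\cdots x_n)|x|=\bigl((x_1\cdots x_n)|x|^p\bigr)^{1/p}(x_1\cdots x_n)^{(p-1)/p}$, which bounds the right-hand side by
\[
p\left(\int_{\Omega}|x|^p(x_1\cdots x_n)|\nabla\phi|^p\,dx\right)^{1/p}\left(\int_{\Omega}(x_1\cdots x_n)|\phi|^p\,dx\right)^{(p-1)/p}.
\]
Writing $A=\int_\Omega(x_1\cdots x_n)|\phi|^p\,dx$ and $B=\int_\Omega|x|^p(x_1\cdots x_n)|\nabla\phi|^p\,dx$, this reads $2nA\le pB^{1/p}A^{(p-1)/p}$; dividing by $A^{(p-1)/p}$ and raising to the $p$-th power yields $A\le(p/(2n))^pB$, which is exactly \eqref{2.11}.

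For \eqref{2.12}, I would restrict to $\Omega=B_R\cap\mathbb{R}_*^n$ and invoke the elementary bound $|x|^p<R^p$ valid there, so that
\[
R^p\int_{B_R\cap\mathbb{R}_*^n}(x_1\cdots x_n)|\nabla\phi|^p\,dx\ge\int_{B_R\cap\mathbb{R}_*^n}|x|^p(x_1\cdots x_n)|\nabla\phi|^p\,dx.
\]
Combining this with \eqref{2.11} applied on $B_R\cap\mathbb{R}_*^n$ and dividing through by $R^p$ produces the constant $(2n/(pR))^p$, giving \eqref{2.12}. The only points requiring genuine care are the sign and absolute-value bookkeeping in the integration by parts and the verification that $|\textbf{F}|=(x_1\cdots x_n)|x|$, which rests entirely on working in the open orthant; the sharp-looking constant then falls out of the single H\"older split without any further refinement.
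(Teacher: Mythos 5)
Your proof is correct and follows exactly the route the paper indicates: the paper's own (omitted) proof uses the same vector field $\textbf{F}=(x_1^{2}x_2\cdots x_n,\dots,x_1x_2\cdots x_n^{2})$, i.e.\ $F_i=(x_1\cdots x_n)x_i$, with divergence $2n(x_1\cdots x_n)$, followed by integration by parts and H\"older, and the passage to \eqref{2.12} via $|x|<R$ is the intended one. Your variant of finishing with H\"older and dividing by $A^{(p-1)/p}$ rather than Young's inequality with an optimized $\epsilon$ yields the identical constant, so the two arguments are essentially the same.
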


\begin{proof}
The method of proof is essentially the same as that used in proving Theorem \ref{Theorem 2.1}, but using the vector field
\[\textbf{F}=(x_1^{2}x_2\dots x_n,x_1x_2^{2}\dots x_n,\dots,x_1x_2\dots x_n^{2}).\]
We omit the details.

\end{proof}
\smallskip

Next, we have the following theorem.
\begin{theorem}\label{Theorem 2.4}
Let $p>1$. Then for all $\phi \in C_0^{\infty}(\Omega)$, we have
\begin{equation}\label{2.13}
\int_{\Omega} \frac{|x|^p}{x_1^px_2^p\dots x_n^p}|\nabla \phi |^pdx \ge \left(\frac{n(p-1)}{p}\right)^p \int_{\Omega} \frac{|\phi|^p}{x_1^px_2^p \dots x_n^p}dx.
\end{equation}
\end{theorem}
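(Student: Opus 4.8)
The plan is to carry out exactly the divergence-and-integration-by-parts scheme used for Theorem \ref{Theorem 2.1}, the only new ingredient being the choice of vector field. Writing $P := x_1 x_2 \cdots x_n$ for brevity, I would take
\[
\textbf{F} = \frac{x}{(x_1 x_2 \cdots x_n)^{p}} = \left(\frac{x_1}{P^{p}}, \frac{x_2}{P^{p}}, \dots, \frac{x_n}{P^{p}}\right).
\]
A direct computation gives $\partial_i F_i = (1-p)\,P^{-p}$ for each $i$, and hence
\[
\text{div}\,\textbf{F} = \frac{n(1-p)}{x_1^{p} x_2^{p}\cdots x_n^{p}},
\]
which already produces the right-hand weight $\dfrac{1}{x_1^{p}\cdots x_n^{p}}$ up to the constant $n(1-p)$. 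The reason this particular field is the correct choice is that $|\textbf{F}| = |x|/P^{p}$, so the weight $|x|^{p}/P^{p}$ appearing on the left of \eqref{2.13} is precisely what a H\"older splitting of the resulting boundary-free integral will generate.

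Next I would multiply the divergence identity by $|\phi|^{p}$ and integrate over $\Omega$, transferring the divergence onto $|\phi|^{p}$ by integration by parts; the boundary term vanishes since $\phi \in C_0^{\infty}(\Omega)$. This yields the analogue of \eqref{2.3}, namely
\[
n\,\frac{p-1}{p}\int_{\Omega} \frac{|\phi|^{p}}{P^{p}}\,dx = \int_{\Omega} |\phi|^{p-1}\,\nabla \phi \cdot \textbf{F}\,dx .
\]

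I would then estimate the right-hand side by H\"older's inequality with exponents $p$ and $p' = p/(p-1)$, writing the integrand as the product of $\dfrac{|x|}{P}\,|\nabla \phi|$ and $\dfrac{P}{|x|}\,|\phi|^{p-1}|\textbf{F}|$. The first factor raised to the power $p$ gives the gradient weight $\dfrac{|x|^{p}}{P^{p}}$, while the second factor raised to the power $p'$ collapses to $\dfrac{|\phi|^{p}}{P^{p}}$, because $\left(\frac{P}{|x|}|\textbf{F}|\right)^{p'} = \left(P^{1-p}\right)^{p'} = P^{-p}$ and the powers of $|x|$ cancel. Dividing through by $\left(\int_{\Omega} \frac{|\phi|^{p}}{P^{p}}\,dx\right)^{(p-1)/p}$ and raising to the power $p$ then delivers \eqref{2.13} with constant $\left(\frac{n(p-1)}{p}\right)^{p}$. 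Alternatively, one may replace the single H\"older step by the Young-inequality-with-parameter argument of Theorem \ref{Theorem 2.1}, introducing $\epsilon > 0$ and maximizing the resulting coefficient; both routes return the same constant.

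There is no serious obstacle once the vector field is identified. The only point demanding care is the exponent bookkeeping in the H\"older split: one must check that the powers of $|x|$ and of $P = x_1\cdots x_n$ balance exactly so that the second factor reduces to $\dfrac{|\phi|^{p}}{P^{p}}$ with no leftover weight. This verification, which hinges on the identity $(1-p)\,p' = -p$, is what forces the precise combination of weights stated in the theorem, and it is the step I would write out most explicitly.
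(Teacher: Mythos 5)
Your proposal is correct and follows essentially the same route as the paper, which simply invokes the argument of Theorem \ref{Theorem 2.1} with the vector field $\textbf{F}=(-x_1^{1-p}x_2^{-p}\cdots x_n^{-p},\dots,-x_1^{-p}\cdots x_n^{1-p})=-x/(x_1\cdots x_n)^p$ and omits all details; your field is just the negative of the paper's, which is immaterial since only $|\textbf{F}|$ and the sign of the resulting divergence identity matter. Your divergence computation, the identity $(1-p)p'=-p$, and the H\"older split all check out and yield exactly the constant $\left(\tfrac{n(p-1)}{p}\right)^p$.
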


\begin{proof}
The proof follows using the same argument as in the proof of Theorem \ref{2.1} but using  the vector field 	
\[\textbf{F} =(-x_1^{1-p}x_2^{-p}\dots x_n^{-p},-x_1^{-p}x_2^{1-p}\dots x_n^{-p},\dots,-x_1^{-p}x_2^{-p}\dots x_n^{1-p}).\]
We omit the details.
\end{proof}
\smallskip

As a final result of this section, we are going to use Theorem $\ref{Theorem 2.1}$ to prove the following interpolation inequality.

\begin{theorem}\label{Theorem 2.5}
Let $p>2$. Then for all $\phi \in C_0^{\infty}(\Omega)$, we have
\begin{equation}\label{2.14}
\begin{aligned}\left(\int_{\Omega} |\nabla \phi |^{p} dx\right)\left(\int_{\Omega}\left(\frac{1}{x_1^p}+\dots+\frac{1}{x_n^p}\right)|\phi|^p dx\right)^{p-1} \ge K\left(\int_{\Omega}\frac{|\phi|^p}{x_1^p+\dots+x_n^p} dx\right)^p\end{aligned},
\end{equation}
where $K=(\frac{n^2(p-1)}{p})^p$.
\end{theorem}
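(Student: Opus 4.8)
The plan is to derive \eqref{2.14} by combining the weighted Hardy inequality \eqref{2.1} at $\alpha=0$ with an elementary comparison between the two weights $\frac{1}{x_1^p}+\dots+\frac{1}{x_n^p}$ and $\frac{1}{x_1^p+\dots+x_n^p}$ coming from Cauchy--Schwarz. To keep the bookkeeping transparent, I would write
\[
A=\int_{\Omega}|\nabla\phi|^p\,dx,\qquad
B=\int_{\Omega}\Bigl(\tfrac{1}{x_1^p}+\dots+\tfrac{1}{x_n^p}\Bigr)|\phi|^p\,dx,\qquad
C=\int_{\Omega}\frac{|\phi|^p}{x_1^p+\dots+x_n^p}\,dx,
\]
so that the target inequality reads $A\,B^{p-1}\ge K\,C^p$ with $K=\bigl(\frac{n^2(p-1)}{p}\bigr)^p$.

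First I would invoke Theorem \ref{Theorem 2.1} with $\alpha=0$, which gives $A\ge(\frac{p-1}{p})^p B$. Since $B\ge 0$, multiplying through by $B^{p-1}$ yields
\[
A\,B^{p-1}\ge\Bigl(\frac{p-1}{p}\Bigr)^p B^p,
\]
so it remains only to show $B^p\ge n^{2p}C^p$, equivalently $B\ge n^2 C$.

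The key step is a pointwise weight inequality. Setting $a_i=x_i^p>0$, Cauchy--Schwarz (equivalently, the AM--HM inequality) gives
\[
\Bigl(\sum_{i=1}^n a_i\Bigr)\Bigl(\sum_{i=1}^n \frac{1}{a_i}\Bigr)\ge\Bigl(\sum_{i=1}^n 1\Bigr)^2=n^2,
\]
hence pointwise on $\Omega$,
\[
\frac{1}{x_1^p}+\dots+\frac{1}{x_n^p}\ge\frac{n^2}{x_1^p+\dots+x_n^p}.
\]
Multiplying by $|\phi|^p$ and integrating over $\Omega$ produces $B\ge n^2 C$, and therefore $B^p\ge n^{2p}C^p$. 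Combining with the previous display,
\[
A\,B^{p-1}\ge\Bigl(\frac{p-1}{p}\Bigr)^p n^{2p}\,C^p
=\Bigl(\frac{n^2(p-1)}{p}\Bigr)^p C^p=K\,C^p,
\]
which is exactly \eqref{2.14}.

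I do not expect a serious obstacle here: the whole argument rests on recognizing that the two competing weights are linked by the classical AM--HM inequality, after which the Hardy bound \eqref{2.1} and a single multiplication by $B^{p-1}$ assemble the product form. The only point requiring care is matching the exponents, namely verifying that $(\frac{p-1}{p})^p n^{2p}$ collapses to precisely $K=(\frac{n^2(p-1)}{p})^p$; this is immediate once $B\ge n^2C$ is raised to the $p$-th power.
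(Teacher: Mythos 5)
Your proof is correct and uses exactly the same two ingredients as the paper: the Cauchy--Schwarz (AM--HM) comparison $\frac{1}{x_1^p}+\dots+\frac{1}{x_n^p}\ge\frac{n^2}{x_1^p+\dots+x_n^p}$ and Theorem \ref{Theorem 2.1} with $\alpha=0$. The paper merely arranges the algebra differently (splitting $B=B^{1/p}B^{(p-1)/p}$ and applying Hardy to the first factor before raising to the $p$-th power), so your argument is essentially the same, and if anything slightly cleaner.
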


\begin{proof}
Let $u=(x_1^{p/2}, x_2^{p/2},\dots,x_n^{p/2})$ and $v=(\frac{1}{x_1^{p/2}}, \frac{1}{x_2^{p/2}}, \dots, \frac{1}{x_n^{p/2}})$ be vectors in $\Omega$. Application of the Cauchy-Schwarz inequality yields
	
\begin{equation}\label{2.15} \frac{1}{x_1^p}+\frac{1}{x_2^p}+\dots + \frac{1}{x_n^p} \ge \frac{n^2}{x_1^p+x_2^p+\dots+x_n^p}. \end{equation}
Multiplying both sides of (\ref{2.15}) by $|\phi|^p$ and integrating over $\Omega$, we obtain
\begin{equation}\label{2.16}
\begin{aligned}
n^2\int_{\Omega}\frac{|\phi|^p}{(x_1^p+x_2^p+\dots+x_n^p)} dx \leq \int_{\Omega}\left(\frac{1}{x_1^p}+\dots+\frac{1}{x_n^p}\right)|\phi|^p dx.
\end{aligned}
\end{equation}
Let us denote the right hand side of  (\ref{2.16})  by $R$.
It is clear that

\begin{equation} \label{2.17}
R=\left(\int_{\Omega}\left(\frac{1}{x_1^p}+\dots+\frac{1}{x_n^p}\right)|\phi|^p dx\right)^{\frac{1}{p}}\left(\int_{\Omega}\left(\frac{1}{x_1^p}+\dots+\frac{1}{x_n^p}\right)|\phi|^p dx\right)^{1-\frac{1}{p}}.
\end{equation}
Applying the inequality (\ref{2.1}) with $\alpha=0$   to the first integral term in ($\ref{2.17}$) we find
	
\begin{equation} \label{2.18}
R\le \frac{p}{p-1} \left(\int_{\Omega}|\nabla\phi|^p dx\right)^{1/p}\left(\int_{\Omega}\left(\frac{1}{x_1^p}+\dots+\frac{1}{x_n^p}\right)|\phi|^p dx\right)^{\frac{p-1}{p}}.
\end{equation}
Combining (\ref{2.16}) and (\ref{2.18}) gives the desired inequality,
\[\begin{aligned}\left(\int_{\Omega} |\nabla \phi |^{p} dx\right)\left(\int_{\Omega}\left(\frac{1}{x_1^p}+\dots+\frac{1}{x_n^p}\right)|\phi|^p dx\right)^{p-1} \ge K\left(\int_{\Omega}\frac{|\phi|^p}{x_1^p+\dots+x_n^p} dx\right)^p,\end{aligned}\]
where $K=\left(\frac{n^{2}(p-1)}{p}\right)^p$.
\end{proof}

\section{Hardy type inequalities with remainders}\label{Section3}

The  purpose of this section is to obtain various remainder terms for the following Hardy type inequality,
\begin{equation}\label{3.1}
\int_{\Omega}|\nabla\phi|^pdx \geq H_p \int_\Omega (\frac{1}{x_1^p}+\frac{1}{x_2^p}+\dots+\frac{1}{x_n^p})|\phi|^p dx,
\end{equation}
where  $H_p$ is a positive constant and will be determined later. Our methods are constructive and give explicit  remainder terms.
\smallskip

Our first result is the following  Hardy-Sobolev-Maz'ya type  inequality.

\begin{theorem}\label{Theorem 3.1}
Let $p>2$, $D=2n$ and  $\phi \in C_0^{\infty}(\Omega)$. Then  we have,
\begin{equation}\label{3.2}
\begin{aligned}
\int_{\Omega}|\nabla\phi|^pdx \geq& \frac{1}{p^p}\int_\Omega \left(\frac{1}{x_1^p}+\frac{1}{x_2^p}+\dots+\frac{1}{x_n^p}\right)|\phi|^p dx\\ &+\frac{1}{2^{p-1}-1}\left(S_p(D)\right)^p\left(\int_\Omega (x_1x_2\dots x_n)^{\frac{-p}{D-p}}|\phi|^{\frac{Dp}{D-p}}dx\right)^{\frac{D-p}{D}}.\end{aligned}
\end{equation}
Moreover, for every $\phi\in C_0^{\infty}(B_R\cap \mathbb{R}_{*}^n)$,

\begin{equation}\label{3.3}
\begin{aligned}
\int_{B_R\cap \mathbb{R}_{*}^n}|\nabla\phi|^pdx \geq \frac{1}{p^p}&\int_{B_R\cap \mathbb{R}_{*}^n} \left(\frac{1}{x_1^p}+\frac{1}{x_2^p}+\dots+\frac{1}{x_n^p}\right)|\phi|^p dx\\&+C\left(\int_{B_R\cap \mathbb{R}_{*}^n} |\phi|^{\frac{Dp}{D-p}}dx\right)^{\frac{D-p}{D}}\end{aligned}
\end{equation}
where $C=\frac{\left(S_p(D)\right)^p}{2^{p-1}-1}(\frac{\sqrt{n}}{R})^{\frac{p}{2}}$.
	
\end{theorem}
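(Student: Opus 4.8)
The plan is to reduce \eqref{3.2} to the monomial-weighted Sobolev inequality \eqref{1.15} with $A_1=\dots=A_n=1$ (for which $D=n+\sum_i A_i=2n$) via the ground-state substitution $\psi:=w^{-1/p}\phi$, where $w:=x_1\cdots x_n$. Since $\nabla w=w\,e$ with $e:=(x_1^{-1},\dots,x_n^{-1})$, writing $\phi=w^{1/p}\psi$ gives $\nabla\phi=w^{1/p}\big(\nabla\psi+\tfrac1p\psi\,e\big)$, hence
\[
\int_\Omega|\nabla\phi|^p\,dx=\int_\Omega w\,\Big|\nabla\psi+\tfrac1p\psi\,e\Big|^p\,dx .
\]
As $\phi\in C_0^\infty(\Omega)$ is supported compactly inside $\Omega\subset\mathbb{R}_*^n$, the function $\psi$ is smooth, compactly supported, and (extended by $0$) lies in $C_0^1(\mathbb{R}_*^n)$; every integral below is over a set bounded away from the hyperplanes $\{x_i=0\}$, so no integrability or boundary issue arises.

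Next I would apply the elementary inequality, valid for all $a,b\in\mathbb{R}^n$ when $p\ge2$,
\[
|a+b|^p\ge|a|^p+p\,|a|^{p-2}\langle a,b\rangle+\frac{1}{2^{p-1}-1}\,|b|^p ,
\]
with $a=\tfrac1p\psi\,e$ and $b=\nabla\psi$, then multiply by $w$ and integrate. The first term gives $p^{-p}\int_\Omega w\,|\psi|^p|e|^p\,dx$; using the power-mean bound $|e|^p=(\sum_i x_i^{-2})^{p/2}\ge\sum_i x_i^{-p}$ (valid for $p\ge2$) together with $w\,|\psi|^p=|\phi|^p$, this is at least $\tfrac{1}{p^p}\int_\Omega(\sum_i x_i^{-p})|\phi|^p\,dx$, the stated Hardy term. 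To the third term I would apply \eqref{1.15}: since $w\,|\psi|^{p^{*}}=w^{1-p^{*}/p}|\phi|^{p^{*}}=w^{-p/(D-p)}|\phi|^{p^{*}}$ and $p/p^{*}=(D-p)/D$, the inequality $\frac{1}{2^{p-1}-1}\int_\Omega w|\nabla\psi|^p\,dx\ge\frac{(S_p(D))^p}{2^{p-1}-1}\big(\int_\Omega w|\psi|^{p^{*}}\,dx\big)^{p/p^{*}}$ is exactly the claimed Sobolev remainder (this step requires $p<D=2n$).

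The crux is the cross term $p\int_\Omega w\,|a|^{p-2}\langle a,b\rangle\,dx$. Using $|\psi|^{p-2}\psi\,\nabla\psi=\tfrac1p\nabla(|\psi|^p)$, it equals $\tfrac{1}{p^{p-1}}\int_\Omega\langle\mathbf G,\nabla(|\psi|^p)\rangle\,dx$ with $\mathbf G:=w\,(\sum_i x_i^{-2})^{(p-2)/2}\,e$, and integration by parts (no boundary term, by compact support) turns it into $-\tfrac{1}{p^{p-1}}\int_\Omega(\operatorname{div}\mathbf G)\,|\psi|^p\,dx$. A direct computation of $\partial_i G_i$ shows that its two order-$x_i^{-2}$ contributions cancel, leaving
\[
\operatorname{div}\mathbf G=-(p-2)\,w\,\Big(\sum_i x_i^{-2}\Big)^{(p-4)/2}\sum_i x_i^{-4}\le0 .
\]
Hence the cross term is nonnegative and may be discarded, which yields \eqref{3.2}. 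I expect this sign verification—the cancellation inside $\operatorname{div}\mathbf G$—to be the main technical point, and it is precisely where the hypothesis $p>2$ enters.

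Finally, for \eqref{3.3} I would localize to $B_R\cap\mathbb{R}_*^n$ and bound the weight in the Sobolev remainder from below. By the AM--GM inequality applied to $x_1^2,\dots,x_n^2$, on $B_R\cap\mathbb{R}_*^n$ one has $w=x_1\cdots x_n\le(|x|^2/n)^{n/2}<(R/\sqrt n)^n$, so $w^{-p/(D-p)}>(\sqrt n/R)^{np/(D-p)}$ pointwise. Pulling this constant out of the integral and raising to the power $(D-p)/D$ converts the weighted Sobolev term into $(\sqrt n/R)^{np/D}\big(\int_{B_R\cap\mathbb{R}_*^n}|\phi|^{p^{*}}\,dx\big)^{(D-p)/D}$; with $D=2n$ the exponent equals $np/D=p/2$, producing the constant $C=\frac{(S_p(D))^p}{2^{p-1}-1}(\sqrt n/R)^{p/2}$ and hence \eqref{3.3}.
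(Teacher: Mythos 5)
Your proposal is correct and follows essentially the same route as the paper: the ground-state substitution $\phi=(x_1\cdots x_n)^{1/p}\psi$, the convexity inequality $|a+b|^p\ge |a|^p+p|a|^{p-2}\langle a,b\rangle+\frac{1}{2^{p-1}-1}|b|^p$, integration by parts with the sign analysis of the divergence, the bound $(\sum_i x_i^{-2})^{p/2}\ge\sum_i x_i^{-p}$, the weighted Sobolev inequality \eqref{1.15}, and AM--GM for the ball case. The only cosmetic difference is that you fix the exponent $\alpha=1/p$ from the start (observing the cancellation in $\operatorname{div}\mathbf{G}$ directly), whereas the paper carries a general $\alpha$ and optimizes the coefficient $(1-p)\alpha^p+\alpha^{p-1}$ at the end.
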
	
\begin{proof}
Let $\phi \in C_0^{\infty} (\Omega)$ and define  $\phi=(x_1x_2\dots x_n)^{\alpha}\psi$  where $\alpha >0$. Then we have
\[|\nabla \phi|^p=|\alpha (x_1x_2\dots x_n)^{\alpha}(\frac{1}{x_1},\frac{1}{x_2},\dots,\frac{1}{x_n})\psi +(x_1x_2\dots x_n)^{\alpha}\nabla \psi|^p.\]
We now use the following convexity inequality which is valid for any $a, b \in \mathbb{R}^n$ and  $p\ge 2$,
	
\begin{equation} \label{3.4}
|a+b|^p-|a|^p\ge p|a|^{p-2}a\cdot b+c(p)|b|^p
\end{equation} 	
where $c(p)=\frac{1}{2^{p-1}-1}$ (see \cite{Lindqvist}). This yields
\begin{equation}\label{3.5}
\begin{aligned}
|\nabla\phi|^p \ge & \alpha^p (x_1x_2\dots x_n)^{p\alpha}(\frac{1}{x_1^2}+\frac{1}{x_2^2}+\dots+\frac{1}{x_n^2})^{\frac{p}{2}} |\psi|^p \\&+  \alpha^{p-1}(x_1x_2\dots x_n)^{p\alpha}(\frac{1}{x_1^2}+\frac{1}{x_2^2}+\dots+\frac{1}{x_n^2})^{\frac{p-2}{2}} (\frac{1}{x_1},\frac{1}{x_2},\dots,\frac{1}{x_n}) \cdot \nabla(|\psi|^p)  \\&+\frac{1}{2^{p-1}-1}(x_1x_2\dots x_n)^{p\alpha}|\nabla \psi|^p.
\end{aligned}
\end{equation}
Applying integration by parts over $\Omega$ gives
\begin{equation}\label{3.6}
\begin{aligned}
\int_{\Omega}|\nabla\phi|^p dx \ge &  \alpha^p  \int_{\Omega} (x_1x_2\dots x_n)^{p\alpha}(\frac{1}{x_1^2}+\frac{1}{x_2^2}+\dots+\frac{1}{x_n^2})^{\frac{p}{2}}|\psi|^p dx\\&-\alpha^{p-1}\int_{\Omega}\text{div}\left((x_1\dots x_n)^{p\alpha}(\frac{1}{x_1^2}+\dots+\frac{1}{x_n^2})^{\frac{p-2}{2}}(\frac{1}{x_1},\dots,\frac{1}{x_n})\right) |\psi|^p dx \\&+\frac{1}{2^{p-1}-1}\int_{\Omega}(x_1x_2\dots x_n)^{p\alpha}|\nabla\psi|^p dx.
\end{aligned}
\end{equation}
Let us denote the divergence term on the right hand side of \eqref{3.6} by $M$.  A straightforward computation shows that
\begin{equation}\label{3.7}
\begin{aligned}
M:=&-\alpha^{p-1}\text{div}\left((x_1x_2\dots x_n)^{p\alpha}(\frac{1}{x_1^2}+\dots+\frac{1}{x_n^2})^{\frac{p-2}{2}}(\frac{1}{x_1},\dots,\frac{1}{x_n}) \right)  \\ \ge &-\alpha^{p-1}(\alpha p -1)(x_1x_2\dots x_n)^{p\alpha}(\frac{1}{x_1^2}+\frac{1}{x_2^2}+\dots+\frac{1}{x_n^2})^{\frac{p}{2}}.
\end{aligned}
\end{equation}
Substituting (\ref{3.7}) into (\ref{3.6}) gives

\begin{equation}\label{3.8}
\begin{aligned}
\int_{\Omega} |\nabla \phi|^p dx \ge& \left((1-p)\alpha^{p}+\alpha^{p-1}\right) \int_{\Omega} (x_1x_2\dots x_n)^{ p\alpha }(\frac{1}{x_1^2}+\frac{1}{x_2^2}+\dots+\frac{1}{x_n^2})^{\frac{p}{2}}|\psi|^p dx\\&+\frac{1}{2^{p-1}-1}\int_{\Omega}(x_1x_2\dots x_n)^{p\alpha}|\nabla\psi|^p dx.
\end{aligned}\end{equation}
Optimizing the coefficient in front of the first term on the right hand side, one gets  $\alpha=\frac{1}{p}$. Substituting this value of $\alpha$  into \eqref{3.8}, we get

\begin{equation}\label{3.9}
\begin{aligned}
\int_{\Omega}|\nabla \phi|^p dx \ge &\frac{1}{p^p} \int_{\Omega} (x_1x_2\dots x_n) (\frac{1}{x_1^2}+\frac{1}{x_2^2}+\dots+\frac{1}{x_n^2})^{\frac{p}{2}}|\psi|^p dx\\&+ \frac{1}{2^{p-1}-1} \int_{\Omega} (x_1x_2\dots x_n)|\nabla\psi|^p dx.
\end{aligned}
\end{equation}
Furthermore, using the inequality  \[\left(\frac{1}{x_1^2}+\frac{1}{x_2^2}+\dots+\frac{1}{x_n^2}\right)^{\frac{p}{2}}\ge \frac{1}{x_1^p}+\frac{1}{x_2^p}+\dots+\frac{1}{x_n^p} \] we conclude that
	
\begin{equation}\label{3.10}
\begin{aligned}
\int_{\Omega}|\nabla \phi|^p dx \ge &\frac{1}{p^p} \int_{\Omega}  \left(\frac{1}{x_1^p}+\frac{1}{x_2^p}+\dots+\frac{1}{x_n^p}\right)|\phi|^p dx\\&+ \frac{1}{2^{p-1}-1} \int_{\Omega} (x_1x_2\dots x_n)|\nabla\psi|^p dx.
\end{aligned}
\end{equation}	
Applying the  weighted Sobolev inequality (\ref{1.15}) to the
second integral term on the right hand side of \eqref{3.10}, we get the desired inequality \eqref{3.2},

\begin{equation*}
\begin{aligned}
\int_{\Omega}|\nabla \phi|^p dx \ge& \frac{1}{p^p} \int_{\Omega} \left(\frac{1}{x_1^p}+\frac{1}{x_2^p}+\dots+\frac{1}{x_n^p}\right)|\phi|^p dx\\ &+ \frac{1}{2^{p-1}-1}\left(S_p(D)\right)^p\left(\int_\Omega (x_1x_2\dots x_n)^{\frac{-p}{2n-p}}|\phi|^{\frac{Dp}{D-p}}dx\right)^{\frac{D-p}{D}}.
\end{aligned}
\end{equation*}

Furthermore, with the help of arithmetic-geometric mean inequality, we can easily show that
\[(x_1x_2\dots x_n)^{\frac{-p}{2n-p}} \ge (\frac{\sqrt{n}}{r})^{\frac{np}{2n-p}}> (\frac{\sqrt{n}}{R})^{\frac{np}{2n-p}} .\]
Hence, for any $\phi\in C_0^{\infty}(B_R\cap \mathbb{R}_{*}^n)$, we have the desired inequality \eqref{3.3},
\begin{equation*}
\begin{aligned}
\int_{B_R\cap \mathbb{R}_{*}^n}|\nabla\phi|^pdx \geq \frac{1}{p^p}&\int_{B_R\cap \mathbb{R}_{*}^n} \left(\frac{1}{x_1^p}+\frac{1}{x_2^p}+\dots+\frac{1}{x_n^p}\right)|\phi|^p dx\\&+C\left(\int_{B_R\cap \mathbb{R}_{*}^n} |\phi|^{\frac{Dp}{D-p}}dx\right)^{\frac{D-p}{D}}\end{aligned}
\end{equation*}
where $C=\frac{\left(S_p(D)\right)^p}{2^{p-1}-1}(\frac{\sqrt{n}}{R})^{\frac{p}{2}}$.
\end{proof}
\smallskip

Using Theorem \ref{Theorem 2.3} and the method described above, we obtain the following Hardy-Poincar\'e type inequality.
\begin{theorem}\label{Theorem 3.2}
	Let  $\phi \in C_0^{\infty}(B_R \cap \mathbb{R}_*^n)$ and $p>2$. Then we have,
	\begin{equation}\label{3.11}
	\begin{aligned}
	\int_{B_R \cap \mathbb{R}_*^n}|\nabla\phi|^pdx \geq& \frac{1}{p^p}\int_{B_R \cap \mathbb{R}_*^n} \left(\frac{1}{x_1^p}+\frac{1}{x_2^p}+\dots+\frac{1}{x_n^p}\right)|\phi|^p dx\\ &+(\frac{1}{2^{p-1}-1})(\frac{2n}{pR})^p\int_{B_R \cap \mathbb{R}_*^n}|\phi|^p dx. \end{aligned}
	\end{equation}
	
\end{theorem}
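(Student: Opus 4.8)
The plan is to reproduce the first part of the proof of Theorem \ref{Theorem 3.1} verbatim, working now on the restricted domain $B_R\cap\mathbb{R}_*^n$ in place of $\Omega$, and then to replace the concluding appeal to the weighted Sobolev inequality \eqref{1.15} by an appeal to the weighted Poincar\'e-type inequality \eqref{2.12} of Theorem \ref{Theorem 2.3}. Concretely, I would again set $\phi=(x_1\cdots x_n)^{\alpha}\psi$ with $\alpha>0$, expand $|\nabla\phi|^p$, apply the convexity inequality \eqref{3.4} with $c(p)=\frac{1}{2^{p-1}-1}$, integrate by parts on the cross term, and bound the resulting divergence exactly as in \eqref{3.7}. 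Optimizing the coefficient of the first term again forces $\alpha=\frac{1}{p}$, and I arrive at the analogue of \eqref{3.10} on $B_R\cap\mathbb{R}_*^n$, namely
\begin{equation*}
\int_{B_R\cap\mathbb{R}_*^n}|\nabla\phi|^p dx \ge \frac{1}{p^p}\int_{B_R\cap\mathbb{R}_*^n}\left(\frac{1}{x_1^p}+\dots+\frac{1}{x_n^p}\right)|\phi|^p dx + \frac{1}{2^{p-1}-1}\int_{B_R\cap\mathbb{R}_*^n}(x_1\cdots x_n)|\nabla\psi|^p dx.
\end{equation*}

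The decisive observation is that, with the optimal choice $\alpha=\frac{1}{p}$, the substitution yields $|\phi|^p=(x_1\cdots x_n)|\psi|^p$, so that the inequality \eqref{2.12} applied to $\psi$ on $B_R\cap\mathbb{R}_*^n$ reads
\begin{equation*}
\int_{B_R\cap\mathbb{R}_*^n}(x_1\cdots x_n)|\nabla\psi|^p dx \ge \left(\frac{2n}{pR}\right)^p\int_{B_R\cap\mathbb{R}_*^n}(x_1\cdots x_n)|\psi|^p dx = \left(\frac{2n}{pR}\right)^p\int_{B_R\cap\mathbb{R}_*^n}|\phi|^p dx.
\end{equation*}
Substituting this lower bound into the previous display produces exactly \eqref{3.11}, with the constant $\frac{1}{2^{p-1}-1}\left(\frac{2n}{pR}\right)^p$ sitting in front of the unweighted term $\int|\phi|^p$. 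This also explains why the statement is confined to balls $B_R\cap\mathbb{R}_*^n$ rather than a general $\Omega$: it is precisely inequality \eqref{2.12} that carries the geometric factor $R^{-p}$ (obtained from \eqref{2.11} by using $|x|^p\le R^p$ on the ball), whereas the scale-invariant form \eqref{2.11} would only return a remainder weighted by $|x|^{-p}$.

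The steps I expect to be genuinely routine are the convexity expansion and the divergence computation \eqref{3.7}, since they are identical to those already carried out for Theorem \ref{Theorem 3.1}. The only point deserving care is the legitimacy of the change of unknown $\phi=(x_1\cdots x_n)^{1/p}\psi$ together with the integration by parts, because $\psi=(x_1\cdots x_n)^{-1/p}\phi$ may blow up as one approaches the coordinate hyperplanes $\{x_i=0\}$; one must verify that the weight $(x_1\cdots x_n)^{p\alpha}=(x_1\cdots x_n)$ and the compact support of the test function keep every integral finite and make the boundary contributions in the integration by parts vanish. Since $\phi\in C_0^{\infty}(B_R\cap\mathbb{R}_*^n)$ has support away from $\partial\mathbb{R}_*^n$, this is handled exactly as in Theorem \ref{Theorem 3.1}, so I would invoke that argument rather than repeat it.
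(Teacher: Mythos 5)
Your proposal is correct and follows exactly the paper's own argument: the paper likewise derives the analogue of \eqref{3.10} on $B_R\cap\mathbb{R}_*^n$ via the substitution $\psi=(x_1\cdots x_n)^{-1/p}\phi$ and then applies inequality \eqref{2.12} to the term $\int (x_1\cdots x_n)|\nabla\psi|^p\,dx$, using $(x_1\cdots x_n)|\psi|^p=|\phi|^p$ to land the constant $\frac{1}{2^{p-1}-1}\left(\frac{2n}{pR}\right)^p$ in front of $\int|\phi|^p\,dx$. Your closing remark about the admissibility of $\psi$ (which is not compactly supported away from the coordinate hyperplanes) is a legitimate point of care that the paper itself does not address explicitly.
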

\begin{proof}
	Let $\phi \in C_0^{\infty} (B_R \cap \mathbb{R}_*^n)$ and define  $\psi=(x_1x_2\dots x_n)^{-\frac{1}{p}}\phi$.
	Using the same argument as in Theorem \ref{Theorem 3.1}, we have the following inequality (see (\ref{3.10}))
	
	\begin{equation}\label{3.12}
	\begin{aligned}
	\int_{B_R \cap \mathbb{R}_*^n}|\nabla \phi|^p dx \ge &\frac{1}{p^p} \int_{B_R \cap \mathbb{R}_*^n}  \left(\frac{1}{x_1^p}+\frac{1}{x_2^p}+\dots+\frac{1}{x_n^p}\right)|\phi|^p dx\\&+ \frac{1}{2^{p-1}-1} \int_{B_R \cap \mathbb{R}_*^n} (x_1x_2\dots x_n)|\nabla\psi|^p dx.
	\end{aligned}
	\end{equation}	
	An application of the inequality (\ref{2.12}) to the second term on the right-hand side of (\ref{3.12}) which gives the desired inequality 		
	\begin{equation*}
	\begin{aligned}
	\int_{B_R \cap \mathbb{R}_*^n}|\nabla\phi|^pdx \geq& \frac{1}{p^p}\int_{B_R \cap \mathbb{R}_*^n} \left(\frac{1}{x_1^p}+\frac{1}{x_2^p}+\dots+\frac{1}{x_n^p}\right)|\phi|^p dx\\ &+(\frac{1}{2^{p-1}-1})(\frac{2n}{pR})^p\int_{B_R \cap \mathbb{R}_*^n}|\phi|^p dx. \end{aligned}
	\end{equation*}
	
\end{proof} 
	
We now present another theorem that allows us to obtain different remainder terms for the Hardy type inequality \eqref{3.1}.

\begin{theorem}\label{Theorem 3.3}
Let $p>2$. Let $\delta$ be  nonnegative function  and $\delta(x)\in C^{2}(\Omega)$ such
that $-\emph{div}\left((x_1x_2\dots  x_n) \frac{|\nabla \delta|^{p-2}}{\delta^{p-2}} \nabla \delta\right)\ge 0$ in the sense of distributions. Then for all $\phi \in C_0^{\infty}(\Omega)$, we have
\begin{equation}\label{3.13}
\begin{aligned}
\int_{\Omega}|\nabla\phi|^pdx \geq \frac{1}{p^p} \int_{\Omega} \left(\frac{1}{x_1^p}+\frac{1}{x_2^p}+\dots+\frac{1}{x_n^p}\right)|\phi|^p dx+(\frac{1}{2^{p-1}-1})\frac{1}{p^p}\int_{\Omega} \frac{|\nabla\delta|^p}{\delta^p}|\phi|^p dx.
\end{aligned}
\end{equation}

\end{theorem}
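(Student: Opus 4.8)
The plan is to piggyback on the computation already carried out in the proof of Theorem~\ref{Theorem 3.1}. Setting $\psi=(x_1x_2\dots x_n)^{-1/p}\phi$ (equivalently $\phi=(x_1\cdots x_n)^{1/p}\psi$, i.e. the choice $\alpha=\frac1p$), the convexity inequality \eqref{3.4}, the integration by parts, and the divergence estimate \eqref{3.7} already deliver inequality \eqref{3.10}, namely
\[
\int_{\Omega}|\nabla \phi|^p dx \ge \frac{1}{p^p}\int_{\Omega}\Big(\frac{1}{x_1^p}+\dots+\frac{1}{x_n^p}\Big)|\phi|^p dx+\frac{1}{2^{p-1}-1}\int_{\Omega}(x_1x_2\dots x_n)|\nabla\psi|^p dx.
\]
Thus the first (Hardy) term is already exactly the one appearing in \eqref{3.13}, and the entire task reduces to bounding the weighted gradient term $\int_{\Omega}(x_1\cdots x_n)|\nabla\psi|^p dx$ from below by $\frac{1}{p^p}\int_{\Omega}\frac{|\nabla\delta|^p}{\delta^p}|\phi|^p dx$. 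Since $(x_1\cdots x_n)|\psi|^p=|\phi|^p$, this is precisely a weighted Hardy inequality for the homogeneous weight $w:=x_1x_2\dots x_n$ with potential $|\nabla\delta|^p/\delta^p$.

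To establish that weighted Hardy inequality I would use the supersolution / vector-field method. Introduce $\mathbf{F}:=w\,\dfrac{|\nabla\delta|^{p-2}}{\delta^{p-1}}\nabla\delta$. A direct computation gives $\text{div}\,\mathbf{F}=\frac{1}{\delta}\,\text{div}\big(w\frac{|\nabla\delta|^{p-2}}{\delta^{p-2}}\nabla\delta\big)-w\frac{|\nabla\delta|^p}{\delta^p}$, so the hypothesis $-\text{div}\big(w\frac{|\nabla\delta|^{p-2}}{\delta^{p-2}}\nabla\delta\big)\ge 0$ together with $\delta\ge 0$ forces $-\text{div}\,\mathbf{F}\ge w\frac{|\nabla\delta|^p}{\delta^p}$ in the distributional sense. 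Multiplying by $|\psi|^p$ and integrating by parts (legitimate because $\psi$ is compactly supported in $\Omega$), then using $\nabla(|\psi|^p)=p|\psi|^{p-2}\psi\nabla\psi$ and $|\mathbf{F}|=w\,|\nabla\delta|^{p-1}/\delta^{p-1}$, yields $\int_{\Omega}w\frac{|\nabla\delta|^p}{\delta^p}|\psi|^p dx\le p\int_{\Omega}|\psi|^{p-1}|\nabla\psi|\,|\mathbf{F}|\,dx$. A H\"older inequality with exponents $p$ and $\frac{p}{p-1}$ gives $B\le p\,A^{1/p}B^{(p-1)/p}$, where $A=\int_\Omega w|\nabla\psi|^p dx$ and $B=\int_\Omega w\frac{|\nabla\delta|^p}{\delta^p}|\psi|^p dx$; cancelling $B^{(p-1)/p}$ produces $B\le p^p A$, that is, $\int_{\Omega}(x_1\cdots x_n)|\nabla\psi|^p dx\ge \frac{1}{p^p}\int_{\Omega}w\frac{|\nabla\delta|^p}{\delta^p}|\psi|^p dx$.

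Feeding this estimate into the second term of \eqref{3.10} and recalling $w|\psi|^p=|\phi|^p$ then produces exactly \eqref{3.13}. The main point to watch is the integration by parts in the weighted Hardy step: since $\delta$ is only assumed nonnegative and $C^2$ with a one-sided distributional inequality, I would either work on the open set $\{\delta>0\}$ and pass to a limit, or interpret every identity weakly against the test density $|\psi|^p$, so that the sign hypothesis on $-\text{div}\big(w\frac{|\nabla\delta|^{p-2}}{\delta^{p-2}}\nabla\delta\big)$ is invoked correctly and no boundary contributions from $\partial\Omega$ or from the zero set of $\delta$ are lost. Everything else is the routine H\"older/Young bookkeeping already seen in the proof of Theorem~\ref{Theorem 2.1}.
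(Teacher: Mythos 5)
Your first half is exactly the paper's: both proofs start from the substitution $\psi=(x_1\cdots x_n)^{-1/p}\phi$ and reuse the computation of Theorem \ref{Theorem 3.1} to arrive at \eqref{3.10}, so everything hinges on the lower bound $\int_{\Omega}(x_1\cdots x_n)|\nabla\psi|^p\,dx\ge \frac{1}{p^p}\int_{\Omega}\frac{|\nabla\delta|^p}{\delta^p}|\phi|^p\,dx$. For that step you diverge from the paper. The paper makes a second substitution $\varphi=\delta^{-1/p}\psi$, applies the convexity inequality \eqref{3.4} once more to expand $|\nabla\psi|^p$, integrates by parts, and drops the divergence term using the sign hypothesis --- i.e.\ it repeats the ``expand and discard'' mechanism of Theorem \ref{Theorem 3.1}. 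You instead run the vector-field argument of Theorem \ref{Theorem 2.1}: test $-\mathrm{div}\,\mathbf{F}\ge w|\nabla\delta|^p/\delta^p$ against $|\psi|^p$, apply H\"older with exponents $p$ and $p/(p-1)$, and cancel $B^{(p-1)/p}$. Your divergence identity and the resulting constant $1/p^p$ are correct, so the two routes land on the same inequality \eqref{3.13}. The trade-offs are minor: your route avoids a second invocation of the convexity inequality (irrelevant here, since \eqref{3.10} already forces $p\ge 2$), but it introduces the cancellation step, which silently requires $0<B<\infty$; since $\delta$ may vanish inside $\Omega$, this needs the truncation/limiting argument you allude to at the end, whereas the paper's version only drops a nonnegative term and never divides. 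Both proofs share the same delicate point --- making sense of quantities like $\delta^{-1/p}$ or $\mathbf{F}$ on the zero set of $\delta$ and justifying the distributional integration by parts there --- and you are, if anything, more explicit about it than the paper is.
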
	
\begin{proof}
Let $\phi \in C_0^{\infty} (\Omega)$ and define  $\psi=(x_1x_2\dots x_n)^{-\frac{1}{p}}\phi$.
Using the same argument as in Theorem $\ref{Theorem 3.1}$, we have the following inequality (see (\ref{3.10}))
\begin{equation}\label{3.14}
\begin{aligned}
\int_{\Omega}|\nabla \phi|^p dx \ge &\frac{1}{p^p} \int_{\Omega} \left(\frac{1}{x_1^p}+\frac{1}{x_2^p}+\dots+\frac{1}{x_n^p}\right)|\phi|^p dx\\&+ \frac{1}{2^{p-1}-1} \int_{\Omega} (x_1x_2\dots x_n)|\nabla\psi|^p dx.
\end{aligned}
\end{equation}
Now we focus on the second term on the right-hand side of this inequality.  Let us define a new variable $\varphi (x):=\delta^{-1/p}\psi(x)$. It follows from the convexity inequality (\ref{3.4}) that
\[|\nabla\psi|^p \ge \frac{|\varphi|^p}{p^p}\delta^{1-p}|\nabla
\delta|^p+p^{1-p}|\nabla\delta|^{p-2} \delta^{2-p}\nabla\delta\cdot \nabla(|\varphi|^p)\] and therefore
\begin{equation}\label{3.15}
\begin{aligned}\int_{\Omega} (x_1x_2 \dots  x_n) |\nabla\psi|^p dx \ge&
\frac{1}{p^p}\int_{\Omega} (x_1x_2\dots x_n) |\nabla \delta|^p\delta^{1-p} |\varphi|^p dx\\&+p^{1-p}\int_{\Omega} (x_1x_2 \dots  x_n)\frac{|\nabla\delta|^{p-2}}{\delta^{p-2}} \nabla
\delta\cdot \nabla (|\varphi|^p) dx.\end{aligned}\end{equation}
Applying integration by parts to the second term on the right hand side of (\ref{3.15})  gives
\[\begin{aligned}\int_{\Omega} (x_1x_2\dots  x_n) |\nabla \psi|^p dx \ge&\frac{1}{p^p}\int_{\Omega} (x_1x_2 \dots  x_n)|\nabla \delta|^p\delta^{1-p} |\varphi|^p dx\\&-p^{1-p}\int_{\Omega} \text{div}\left((x_1x_2 \dots  x_n)\frac{|\nabla \delta |^{p-2}}{\delta^{p-2}}\nabla\delta\right)|\varphi|^p dx.\end{aligned}\]
Since $-\text{div}\left((x_1x_2 \dots  x_n)\frac{|\nabla \delta |^{p-2}}{\delta^{p-2}}\nabla \delta\right)\ge 0$  and  $\varphi=(x_1x_2\dots x_n)^{\frac{-1}{p}}\phi\delta^{-1/p}$ then we get
\begin{equation}\label{3.16} \int_{\Omega}(x_1x_2\dots  x_n)|\nabla\psi|^pdx \ge
\frac{1}{p^p}\int_{\Omega} \frac{|\nabla \delta|^p}{\delta^p}
|\phi|^pdx.
\end{equation}
Substituting (\ref{3.16}) into (\ref{3.15}) gives the desired inequality
\[\int_{\Omega}|\nabla\phi|^pdx \geq \frac{1}{p^p} \int_{\Omega} \left(\frac{1}{x_1^p}+\frac{1}{x_2^p}+\dots+\frac{1}{x_n^p}\right)|\phi|^p dx+(\frac{1}{2^{p-1}-1})\frac{1}{p^p}\int_{\Omega} \frac{|\nabla\delta|^p}{\delta^p}|\phi|^p dx.\]
	
\end{proof}
\smallskip

\subsection{Applications of Theorem \ref{Theorem 3.3}} By applying Theorem \ref{Theorem 3.3} to particular $\delta(x)$, we obtain several Hardy type inequalities with remainder terms. First, let us consider the function,
\[\delta(x):=e^{-(2^{p-1}-1)^{\frac{1}{p}}n|x|}.\]
Note that $\delta$ fulfills the assumptions  of Theorem $\ref{Theorem 3.3}$ when  $|x|\le  \frac{2n-1}{n}\left(\frac{1}{2^{p-1}-1}\right)^{\frac{1}{p}}.$ Hence, we have the  following Hardy type inequality, which has a Poincar\'e remainder term.

\begin{corollary}\label{Corollary 3.1}
Let $2<p<n$ and $R= \frac{2n-1}{n}(\frac{1}{2^{p-1}-1})^{\frac{1}{p}}$. Then for all $\phi \in C_0^{\infty}(B_R \cap \mathbb{R}_*^n)$, we have
\begin{equation}\label{3.17}
\begin{aligned}\int_{B_R \cap \mathbb{R}_*^n}|\nabla \phi|^p dx \ge& \frac{1}{p^p} \int_{B_R \cap \mathbb{R}_*^n} \left(\frac{1}{x_1^p}+\frac{1}{x_2^p}+\dots+\frac{1}{x_n^p}\right)|\phi|^p dx\\&+ \left(\frac{n}{p}\right)^p\int_{B_R \cap \mathbb{R}_*^n} |\phi|^p dx.\end{aligned}
\end{equation}

\end{corollary}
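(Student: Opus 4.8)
The plan is to apply Theorem~\ref{Theorem 3.3} with the radial weight $\delta(x)=e^{-cn|x|}$, where I write $c=(2^{p-1}-1)^{1/p}$ for brevity, so that $\delta(x)=e^{-(2^{p-1}-1)^{1/p}n|x|}$ as proposed above. Everything then rests on two ingredients: evaluating the quotient $|\nabla\delta|^p/\delta^p$ that governs the remainder in \eqref{3.13}, and verifying the structural hypothesis $-\text{div}\bigl((x_1\cdots x_n)\,|\nabla\delta|^{p-2}\delta^{2-p}\nabla\delta\bigr)\ge 0$ on $B_R\cap\mathbb{R}_*^n$. Granting the latter, the corollary will drop out of Theorem~\ref{Theorem 3.3} by a one-line substitution.

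First I would record that $\nabla\delta=-cn\,\tfrac{x}{|x|}\,\delta$, whence $|\nabla\delta|=cn\,\delta$ and the ratio $|\nabla\delta|^p/\delta^p=(cn)^p=(2^{p-1}-1)\,n^p$ is \emph{constant}. Substituting this into the remainder term of \eqref{3.13}, the coefficient telescopes neatly: $\bigl(\tfrac{1}{2^{p-1}-1}\bigr)\tfrac{1}{p^p}\,(2^{p-1}-1)\,n^p=(n/p)^p$, which is exactly the constant appearing in \eqref{3.17}. Thus the nonnegative remainder contributes precisely $(n/p)^p\int|\phi|^p\,dx$, and no further estimation of this term is needed.

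The step I expect to be the genuine obstacle is checking the sign of the weighted $p$-divergence, since this is what forces the radius $R$. Because $|\nabla\delta|/\delta=cn$ is constant, the field simplifies to $(x_1\cdots x_n)\,|\nabla\delta|^{p-2}\delta^{2-p}\nabla\delta=-(cn)^{p-1}(x_1\cdots x_n)\tfrac{x}{|x|}\delta$, so modulo the positive factor $(cn)^{p-1}$ it suffices to compute $\text{div}\bigl(w\,\tfrac{x}{|x|}\,\delta\bigr)$ with $w=x_1\cdots x_n$. Using $\partial_i w=w/x_i$, the identity $\sum_i\partial_i(x_i/|x|)=(n-1)/|x|$, and $\partial_i\delta=-cn(x_i/|x|)\delta$, the three resulting sums combine to $w\delta\bigl(\tfrac{2n-1}{|x|}-cn\bigr)$. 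Hence $-\text{div}(\cdots)=(cn)^{p-1}w\delta\bigl(\tfrac{2n-1}{|x|}-cn\bigr)$, and since $w,\delta>0$ on $\mathbb{R}_*^n$ this is nonnegative exactly when $|x|\le \tfrac{2n-1}{cn}=\tfrac{2n-1}{n}\bigl(\tfrac{1}{2^{p-1}-1}\bigr)^{1/p}=R$. This confirms the hypothesis of Theorem~\ref{Theorem 3.3} on $B_R\cap\mathbb{R}_*^n$ and completes the plan; the only care required is that $w$ vanishes on $\partial\mathbb{R}_*^n$, so the computation is carried out on the interior, where $\delta$ is smooth and the pointwise inequality yields the distributional one.
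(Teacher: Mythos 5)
Your proposal is correct and follows exactly the route the paper intends: apply Theorem~\ref{Theorem 3.3} with $\delta(x)=e^{-(2^{p-1}-1)^{1/p}n|x|}$, note that $|\nabla\delta|^p/\delta^p=(2^{p-1}-1)n^p$ so the remainder coefficient collapses to $(n/p)^p$, and verify the divergence condition, which your computation correctly shows holds precisely for $|x|\le\frac{2n-1}{n}(2^{p-1}-1)^{-1/p}=R$. The paper states these facts without detail, so your write-up simply supplies the verification it omits.
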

\smallskip

On the other hand, by making the choice
\[\delta(x):= \left(\ln\frac{1}{|x|}\right)^{(2^{p-1}-1)^{\frac{1}{p}}(p-1)},\] gives the following Hardy-Leray type inequality.
\begin{corollary}\label{Corollary 3.2} Let $2<p<n$ and $R= e^{\frac{(1-p)((2^{p-1}-1)^{1/p}-1)}{2n-p}}$. Then for all $\phi \in C_0^{\infty}(B_R \cap \mathbb{R}_*^n)$, we have
\begin{equation}\label{3.18}
\begin{aligned}\int_{B_R \cap \mathbb{R}_*^n}|\nabla \phi|^p dx \ge& \frac{1}{p^p} \int_{B_R \cap \mathbb{R}_*^n}  \left(\frac{1}{x_1^p}+\frac{1}{x_2^p}+\dots+\frac{1}{x_n^p}\right)|\phi|^p dx\\&+ \left(\frac{p-1}{p}\right)^p\int_{B_R \cap \mathbb{R}_*^n} \frac{|\phi|^p}{|x|^p\left(\ln(\frac{1}{|x|})\right)^p}  dx.\end{aligned}
\end{equation}
	
\end{corollary}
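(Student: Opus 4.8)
The plan is to apply Theorem \ref{Theorem 3.3} directly with the prescribed choice $\delta(x) = \left(\ln\frac{1}{|x|}\right)^{\beta}$, where I abbreviate $\beta = (2^{p-1}-1)^{1/p}(p-1)$ and write $r = |x|$, $L = \ln\frac{1}{r}$. Since $\delta$ is radial, every computation reduces to derivatives of functions of $r$, and two things must be verified: the structural hypothesis $-\mathrm{div}\big((x_1\cdots x_n)\frac{|\nabla\delta|^{p-2}}{\delta^{p-2}}\nabla\delta\big)\ge 0$, which will dictate the admissible range of $|x|$ and hence the radius $R$; and the identity that the remainder weight $\frac{|\nabla\delta|^p}{\delta^p}$ collapses to the Hardy--Leray weight with the correct constant. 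Note that $C^2$-smoothness and nonnegativity of $\delta$ are automatic on any compact subset of $B_R\cap\mathbb{R}_*^n$, since such sets stay away from the coordinate hyperplanes and from the origin, where $L>0$.

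The remainder computation is the routine half. Using $\nabla L = -x/r^2$ one gets $|\nabla\delta| = \beta L^{\beta-1}/r$ for $r<1$, so that $\frac{|\nabla\delta|^p}{\delta^p} = \beta^p\,\frac{1}{r^p L^p} = \beta^p\,\frac{1}{|x|^p(\ln\frac1{|x|})^p}$. Since $\beta^p = (2^{p-1}-1)(p-1)^p$, the prefactor of the remainder term in \eqref{3.13} becomes $\big(\tfrac{1}{2^{p-1}-1}\big)\tfrac{1}{p^p}\beta^p = \tfrac{(p-1)^p}{p^p} = \big(\tfrac{p-1}{p}\big)^p$, which is exactly the constant claimed in \eqref{3.18}.

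The main work is checking the sign of the divergence, and this is where the constraint $|x|\le R$ emerges. A direct calculation gives $\frac{|\nabla\delta|^{p-2}}{\delta^{p-2}}\nabla\delta = -\beta^{p-1} L^{\beta-p+1} r^{-p}\,x$, so the vector field inside the divergence is $-\beta^{p-1}(x_1\cdots x_n) L^{\beta-p+1} r^{-p}\,x$. Expanding with $\mathrm{div}\,x = n$ and $\nabla(x_1\cdots x_n)\cdot x = n(x_1\cdots x_n)$, and collecting the two resulting powers of $L$, I expect
\[
-\mathrm{div}\Big((x_1\cdots x_n)\tfrac{|\nabla\delta|^{p-2}}{\delta^{p-2}}\nabla\delta\Big) = \beta^{p-1}(x_1\cdots x_n)\,r^{-p}L^{\beta-p}\big[(2n-p)L-(\beta-p+1)\big].
\]
Every factor in front of the bracket is positive on the orthant (for $r<1$), so nonnegativity is equivalent to $(2n-p)L\ge \beta-p+1$, i.e. $L\ge\frac{\beta-p+1}{2n-p}$. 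Since $\beta-p+1 = (p-1)\big((2^{p-1}-1)^{1/p}-1\big)$, this reads $\ln\frac1{|x|}\ge \frac{(p-1)((2^{p-1}-1)^{1/p}-1)}{2n-p}$, equivalently $|x|\le e^{\frac{(1-p)((2^{p-1}-1)^{1/p}-1)}{2n-p}}=R$. Thus $\delta$ satisfies the hypotheses of Theorem \ref{Theorem 3.3} precisely on $B_R\cap\mathbb{R}_*^n$, and substituting into \eqref{3.13} yields \eqref{3.18}. The subtlety to watch is the sign of $2n-p$ and of the exponent $\beta-p+1$: the assumption $2<p<n$ forces $2n-p>0$ and (since $2^{p-1}-1>1$) also $\beta-p+1>0$, so the threshold is positive, the direction of the inequality is preserved, and $R<1$ as required for $L>0$ on the support.
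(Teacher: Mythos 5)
Your proposal is correct and is exactly the argument the paper intends: the corollary is stated as a direct application of Theorem \ref{Theorem 3.3} with $\delta=(\ln\frac{1}{|x|})^{(2^{p-1}-1)^{1/p}(p-1)}$, and your computations of the remainder weight $\frac{|\nabla\delta|^p}{\delta^p}=\beta^p|x|^{-p}(\ln\frac1{|x|})^{-p}$, the resulting constant $(\frac{p-1}{p})^p$, and the divergence sign condition $(2n-p)\ln\frac1{|x|}\ge(p-1)\big((2^{p-1}-1)^{1/p}-1\big)$ yielding the stated radius $R$ all check out. The paper omits these details entirely, so your write-up simply supplies the verification it leaves to the reader.
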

\smallskip

Finally, let us consider the function
\[\delta(x):= |x|^{(p-n)(2^{p-1}-1)^{\frac{1}{p}}}.\]
In fact, this choice combines both the Hardy \eqref{1.1} and Hardy type \eqref{3.1}  inequalities into a single inequality.

\begin{corollary}\label{Corollary 3.3} Let $2<p<n$. Then for all $\phi \in C_0^{\infty}(\Omega)$, we have
\begin{equation}\label{3.19}
\begin{aligned}\int_{\Omega}|\nabla \phi|^p dx \ge& \frac{1}{p^p} \int_{\Omega}  \left(\frac{1}{x_1^p}+\frac{1}{x_2^p}+\dots+\frac{1}{x_n^p}\right)|\phi|^p dx\\&+ \left(\frac{n-p}{p}\right)^p\int_{\Omega} \frac{|\phi|^p}{|x|^p}  dx.\end{aligned}
\end{equation}
	
\end{corollary}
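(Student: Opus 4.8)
The plan is to apply Theorem \ref{Theorem 3.3} directly with the prescribed weight $\delta(x)=|x|^{(p-n)(2^{p-1}-1)^{1/p}}$, so that the whole task reduces to two computations: verifying the distributional sign hypothesis on $\delta$, and evaluating the gradient ratio $|\nabla\delta|^p/\delta^p$ that produces the remainder constant. Throughout I write $\beta:=(p-n)(2^{p-1}-1)^{1/p}$ and $k:=(2^{p-1}-1)^{1/p}$, so that $\beta=-(n-p)k$ with $\beta<0$ since $p<n$, and I work on the open first orthant, where each $x_i>0$ and $\delta$ is smooth away from the origin.

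First I would record the elementary radial-power formulas $\nabla\delta=\beta|x|^{\beta-2}x$ and $|\nabla\delta|=|\beta|\,|x|^{\beta-1}$. These give
\[
\frac{|\nabla\delta|^p}{\delta^p}=\frac{|\beta|^p|x|^{(\beta-1)p}}{|x|^{\beta p}}=|\beta|^p|x|^{-p},
\]
and since $k^p=2^{p-1}-1$ we have $|\beta|^p=(n-p)^p(2^{p-1}-1)$. Substituting into the remainder term of Theorem \ref{Theorem 3.3}, the factor $\frac{1}{2^{p-1}-1}$ cancels $k^p$ exactly, leaving
\[
\left(\frac{1}{2^{p-1}-1}\right)\frac{1}{p^p}\,\frac{|\nabla\delta|^p}{\delta^p}=\frac{(n-p)^p}{p^p}\,|x|^{-p}=\left(\frac{n-p}{p}\right)^p|x|^{-p},
\]
which is precisely the claimed Hardy remainder. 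This also explains the peculiar exponent in the definition of $\delta$: it is tuned so that the constant $2^{p-1}-1$ coming from the convexity inequality disappears.

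The step I expect to be the crux is verifying the hypothesis $-\mathrm{div}\!\left((x_1\cdots x_n)\frac{|\nabla\delta|^{p-2}}{\delta^{p-2}}\nabla\delta\right)\ge 0$. Setting $w:=x_1\cdots x_n$, a short simplification collapses the $p$-weighted field to $\frac{|\nabla\delta|^{p-2}}{\delta^{p-2}}\nabla\delta=|\beta|^{p-2}\beta\,|x|^{\beta-p}x$, so I must evaluate $\mathrm{div}\big(w|x|^{\beta-p}x\big)$. Using $\partial_i w=w/x_i$ on the orthant together with $\partial_i|x|^{\gamma}=\gamma|x|^{\gamma-2}x_i$ (with $\gamma:=\beta-p$), each summand contributes $2w|x|^{\gamma}+\gamma w|x|^{\gamma-2}x_i^2$; summing over $i$ and using $\sum_i x_i^2=|x|^2$ yields the clean identity $\mathrm{div}\big(w|x|^{\gamma}x\big)=(2n+\gamma)\,w|x|^{\gamma}$. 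Since $|\beta|^{p-2}\beta<0$, the sign of $-\mathrm{div}$ is governed by the single number $2n+\beta-p$, and the hypothesis holds as soon as $2n+\beta-p\ge 0$.

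It then remains to check this inequality, which I would reduce to $k\le\frac{2n-p}{n-p}$. Writing $\frac{2n-p}{n-p}=2+\frac{p}{n-p}>2$, it suffices to verify $k\le 2$; but $(2^{p-1}-1)^{1/p}\le 2$ is equivalent to $2^{p-1}-1\le 2^p$, i.e. $2^{p-1}+1\ge 0$, which is trivially true. Because this sign condition is a global, $x$-independent inequality, it holds throughout $\mathbb{R}_*^n$; this is exactly why—unlike Corollaries \ref{Corollary 3.1} and \ref{Corollary 3.2}, where the condition held only on a ball—no restriction to $B_R$ is needed and the result is stated for an arbitrary bounded $\Omega$. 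Combining the two computations through Theorem \ref{Theorem 3.3} then yields \eqref{3.19}.
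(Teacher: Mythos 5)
Your proposal is correct and follows exactly the route the paper intends: Corollary \ref{Corollary 3.3} is stated as an application of Theorem \ref{Theorem 3.3} with $\delta(x)=|x|^{(p-n)(2^{p-1}-1)^{1/p}}$, and your two computations (the identity $\mathrm{div}\bigl((x_1\cdots x_n)|x|^{\gamma}x\bigr)=(2n+\gamma)(x_1\cdots x_n)|x|^{\gamma}$ giving the global sign condition $2n+\beta-p\ge 0$, and the cancellation of $2^{p-1}-1$ in the remainder term yielding $\bigl(\tfrac{n-p}{p}\bigr)^p|x|^{-p}$) are precisely the details the paper leaves unspoken. Your observation that the sign condition is $x$-independent, which is why no restriction to a ball $B_R$ appears here unlike in Corollaries \ref{Corollary 3.1} and \ref{Corollary 3.2}, is also consistent with the paper.
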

\smallskip

\section{Maz'ya type inequalities with remainder terms}\label{Section4}
In this final section we assume that  $\Omega$ is a  bounded domain with smooth boundary $\partial \Omega$ in the upper half space $\mathbb{R}_{+}^n$. The purpose of this section is to obtain various Maz'ya type inequalities with remainder terms. Furthermore, we present some interpolation inequalities which will be useful in what follows. To motivate our discussion, let us start with the following family of Maz'ya-Sobolev inequalities.

\begin{lemma} Let $1<p<n+p-1$, $0\le s\le p$ and $\phi \in C_0^{\infty}(\Omega)$. Then we have,
	\begin{equation}\label{4.1}
	\left(\int_{\mathbb{R}_+^n} x_n^{p-1} |\nabla \phi|^p dx\right)^{\frac{1}{p}} \geq C\left(\int_{\mathbb{R}_+^n} \frac{x_n^{(p-1)(p-s)/p}}{(x_{n-1}^2+x_n^2)^{s/2p}}|\phi|^{p^{*}(s)} dx\right)^{\frac{1}{p^{*}(s)}},
	\end{equation}
	where  $D=n+p-1$,  $p^{*}(s)=p\left(\frac{D-s}{D-p}\right)$ and
	$C=\frac{\left(S_p(D)\right)^{\frac{D(p-s)}{p(D-s)}}}{(2p)^{\frac{s}{p^{*}(s)}}}$.
\end{lemma}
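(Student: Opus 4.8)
The plan is to obtain \eqref{4.1} by H\"older interpolation between two sharp inequalities already at our disposal: the Maz'ya inequality \eqref{1.3}, which carries the weight $(x_{n-1}^2+x_n^2)^{-1/2}$, and the Cabr\'e--Ros-Orton weighted Sobolev inequality \eqref{1.15}, which supplies the critical exponent. The interpolation parameter will turn out to be $\theta=s/p$, which lies in $[0,1]$ precisely because $0\le s\le p$; the endpoints $s=0$ and $s=p$ then simply reproduce \eqref{1.15} and \eqref{1.3}, respectively. The hypothesis $1<p<n+p-1$ is exactly $D>p$ with $D=n+p-1$, which is what makes $p^{*}(s)>0$.

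First I would record the two base inequalities on the half-space. Choosing the monomial exponent $A=(0,\dots,0,p-1)$ in \eqref{1.15}, so that $x^A=x_n^{p-1}$ and $D=n+(p-1)=n+p-1$, the admissible set $\mathbb{R}_*^n$ becomes $\{x_n>0\}=\mathbb{R}_+^n$, and the Sobolev inequality reads
\[
\int_{\mathbb{R}_+^n}x_n^{p-1}|\nabla\phi|^p\,dx \ \ge\ \bigl(S_p(D)\bigr)^p\left(\int_{\mathbb{R}_+^n}x_n^{p-1}|\phi|^{p^*}\,dx\right)^{p/p^*}, \qquad p^*=p^*(0)=\tfrac{Dp}{D-p}.
\]
On the other hand, extending $\phi\in C_0^\infty(\Omega)$ by zero and restricting \eqref{1.3} to $\{x_n>0\}$ (where $|x_n|^{p-1}=x_n^{p-1}$) gives
\[
\int_{\mathbb{R}_+^n}x_n^{p-1}|\nabla\phi|^p\,dx \ \ge\ \frac{1}{(2p)^p}\int_{\mathbb{R}_+^n}\frac{|\phi|^p}{(x_{n-1}^2+x_n^2)^{1/2}}\,dx.
\]

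Next I would write the integrand on the right of \eqref{4.1} as a pointwise product of the two model integrands raised to the powers $\theta$ and $1-\theta$ with $\theta=s/p$, namely
\[
\frac{x_n^{(p-1)(p-s)/p}}{(x_{n-1}^2+x_n^2)^{s/2p}}|\phi|^{p^*(s)} = \left(\frac{|\phi|^p}{(x_{n-1}^2+x_n^2)^{1/2}}\right)^{\!\theta}\left(x_n^{p-1}|\phi|^{p^*}\right)^{\!1-\theta}.
\]
Matching powers confirms the choice $\theta=s/p$: the exponent of $x_n$ requires $(p-1)(1-\theta)=(p-1)(p-s)/p$, the exponent of $(x_{n-1}^2+x_n^2)$ requires $\theta/2=s/(2p)$, and, using $p^*=\frac{Dp}{D-p}$, the exponent of $|\phi|$ gives $p\theta+p^*(1-\theta)=s+\frac{D(p-s)}{D-p}=\frac{p(D-s)}{D-p}=p^*(s)$, all consistent. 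Applying H\"older's inequality with conjugate exponents $1/\theta$ and $1/(1-\theta)$ then yields
\[
\int_{\mathbb{R}_+^n}\frac{x_n^{(p-1)(p-s)/p}}{(x_{n-1}^2+x_n^2)^{s/2p}}|\phi|^{p^*(s)}\,dx \ \le\ \left(\int_{\mathbb{R}_+^n}\frac{|\phi|^p}{(x_{n-1}^2+x_n^2)^{1/2}}\,dx\right)^{\!s/p}\left(\int_{\mathbb{R}_+^n}x_n^{p-1}|\phi|^{p^*}\,dx\right)^{\!(p-s)/p}.
\]

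Finally I would substitute the two base inequalities into this estimate. Writing $J:=\int_{\mathbb{R}_+^n}x_n^{p-1}|\nabla\phi|^p\,dx$, the Maz'ya bound controls the first factor by $\bigl((2p)^pJ\bigr)^{s/p}$ and the Sobolev bound controls the second by $\bigl(S_p(D)^{-p^*}J^{p^*/p}\bigr)^{(p-s)/p}$. Collecting the powers of $J$, the exponent is $\frac{s}{p}+\frac{p^*(p-s)}{p^2}=\frac{D-s}{D-p}=\frac{p^*(s)}{p}$, so the right-hand side of \eqref{4.1} is bounded by $(2p)^s\,S_p(D)^{-p^*(p-s)/p}\,J^{p^*(s)/p}$. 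Taking $p^*(s)$-th roots and solving for $J^{1/p}$ then produces \eqref{4.1} with exactly the claimed constant, after simplifying the exponent of $S_p(D)$ via $p^*/p^*(s)=D/(D-s)$ down to $\frac{D(p-s)}{p(D-s)}$. The only genuine work here is this exponent bookkeeping that pins down $C=S_p(D)^{D(p-s)/(p(D-s))}/(2p)^{s/p^*(s)}$; the one conceptual point worth stating carefully is the application of \eqref{1.15} with a degenerate monomial weight (all exponents but the last vanishing), for which the Cabr\'e--Ros-Orton admissible domain is precisely the half-space $\mathbb{R}_+^n$.
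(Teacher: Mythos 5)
Your argument is correct and is exactly the route the paper takes: its one-line proof says to apply H\"older's inequality, then the Maz'ya inequality \eqref{1.3} and the Cabr\'e--Ros-Orton inequality \eqref{1.15} with $A_1=\dots=A_{n-1}=0$, $A_n=p-1$, which is precisely your interpolation with $\theta=s/p$. Your exponent bookkeeping and the resulting constant $C$ check out, so you have simply supplied the details the paper omits.
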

\proof The proof follows  by applying H\"older's inequality, then the Maz'ya inequality \eqref{1.3} and the Sobolev inequality  of Cabr\'e and Ros-Orton \eqref{1.15}  with $A_1=\dots=A_{n-1}=0, A_n=p-1$.  \qed
\smallskip

Inspired by the Maz'ya inequality \eqref{1.3}, we present the following Maz'ya type inequality \eqref{4.2} and its improved version (in the sense that nonnegative terms are added on the right hand side of \eqref{4.2}) over the set $B_R\cap \mathbb{R}_+^n$.  The following is our result.
\begin{theorem}\label{Theorem 4.1}
Let $2\le p <n+p-1$. Then for all $\phi \in C_0^{\infty}(\Omega)$, we have
	\begin{equation}\label{4.2}
	\begin{aligned}
	\int_{\Omega} x_n^{p-1}|\nabla \phi|^pdx \ge \frac{1}{p^p}\int_{\Omega} \frac{x_n^{p-1}}{(x_{n-1}^2+x_n^2)^{p/2}}|\phi|^pdx.
	\end{aligned}
	\end{equation}
	Moreover, for every $\phi \in C_0^{\infty}(B_R\cap \mathbb{R}_+^n)$,
	\begin{equation}\label{4.3}
	\begin{aligned}
	\int_{B_R\cap \mathbb{R}_+^n} x_n^{p-1}|\nabla \phi|^pdx \ge& \frac{1}{p^p}\int_{B_R\cap \mathbb{R}_+^n} \frac{x_n^{p-1}}{(x_{n-1}^2+x_n^2)^{p/2}}|\phi|^pdx\\&+C\left(\int_{B_R\cap \mathbb{R}_+^n}x_n^{p-1}(x_{n-1}^2+x_n^2)^{\frac{D}{2(D-p)}}|\phi|^{\frac{Dp}{D-p}}dx\right)^{\frac{D-p}{D}}.
	\end{aligned}	
	\end{equation}
	where $D=n+p-1$ and  $C= \frac{\left(S_p(D)\right)^p}{(2^{p-1}-1)R}$.
\end{theorem}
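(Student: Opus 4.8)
The plan is to adapt the scheme of Theorem~\ref{Theorem 3.1}, with the monomial weight $(x_1\cdots x_n)^{1/p}$ replaced by a power of the planar radius $\rho:=(x_{n-1}^2+x_n^2)^{1/2}$ in the last two variables. Write $V:=(0,\dots,0,x_{n-1},x_n)$, so that $|V|=\rho$, $\text{div}\,V=2$ and $\nabla\rho=V/\rho$; since $\rho\ge x_n>0$ throughout $\mathbb{R}_+^n$, the weight $\rho$ is smooth and non-vanishing on the support of any test function, so all the integrations by parts below are legitimate.

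First I would set $\phi=\rho^{\alpha}\psi$ with $\alpha<0$ to be chosen, and decompose
\[
\nabla\phi=\alpha\,\rho^{\alpha}\frac{V}{\rho^{2}}\,\psi+\rho^{\alpha}\nabla\psi=:a+b .
\]
Because $p\ge2$, the convexity inequality \eqref{3.4} applies and gives $|\nabla\phi|^{p}\ge|a|^{p}+p|a|^{p-2}a\cdot b+\frac{1}{2^{p-1}-1}|b|^{p}$. Multiplying by $x_n^{p-1}$ and integrating produces three terms: a zeroth-order term $|\alpha|^{p}\int x_n^{p-1}\rho^{p(\alpha-1)}|\psi|^{p}\,dx$, a cross term, and the remainder $\frac{1}{2^{p-1}-1}\int x_n^{p-1}\rho^{p\alpha}|\nabla\psi|^{p}\,dx$.

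The key step is the cross term $p\int x_n^{p-1}|a|^{p-2}a\cdot b\,dx$. Since $|a|^{p-2}a\cdot b$ is a constant multiple of $\rho^{p(\alpha-1)}\,V\cdot\nabla(|\psi|^{p})$, one integration by parts reduces it to a multiple of $\int\text{div}\!\big(x_n^{p-1}\rho^{k}V\big)|\psi|^{p}\,dx$ with $k=p(\alpha-1)$. A direct differentiation, using $\text{div}\,V=2$, gives the exact identity
\[
\text{div}\!\big(x_n^{p-1}\rho^{k}V\big)=(k+p+1)\,x_n^{p-1}\rho^{k}=(p\alpha+1)\,x_n^{p-1}\rho^{p(\alpha-1)};
\]
note that, unlike in Theorem~\ref{Theorem 3.1}, no auxiliary inequality is needed here, as $\rho$ is a single radial variable. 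Collecting the zeroth-order and cross terms, both of which are multiples of $\int x_n^{p-1}\rho^{p(\alpha-1)}|\psi|^{p}\,dx$, the combined coefficient is $|\alpha|^{p-1}\big(1-(p-1)|\alpha|\big)$; exactly as in Theorem~\ref{Theorem 3.1} this is maximized at $|\alpha|=1/p$, i.e.\ $\alpha=-1/p$, with value $1/p^{p}$. Substituting $|\psi|^{p}=\rho\,|\phi|^{p}$ turns this into $\frac{1}{p^{p}}\int x_n^{p-1}\rho^{-p}|\phi|^{p}\,dx$, while the remainder becomes $\frac{1}{2^{p-1}-1}\int x_n^{p-1}\rho^{-1}|\nabla\psi|^{p}\,dx$ with $\psi=\rho^{1/p}\phi$. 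Discarding the nonnegative remainder proves \eqref{4.2} on any such $\Omega$.

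For \eqref{4.3} I would retain the remainder. The one genuine obstacle is that the factor $\rho^{-1}$ blocks a direct use of the weighted Sobolev inequality \eqref{1.15}; this is precisely where the ball is exploited. On $B_R\cap\mathbb{R}_+^n$ one has $\rho\le|x|<R$, hence $\rho^{-1}\ge 1/R$, so
\[
\frac{1}{2^{p-1}-1}\int_{B_R\cap\mathbb{R}_+^n}x_n^{p-1}\rho^{-1}|\nabla\psi|^{p}\,dx\ge\frac{1}{(2^{p-1}-1)R}\int_{B_R\cap\mathbb{R}_+^n}x_n^{p-1}|\nabla\psi|^{p}\,dx .
\]
Applying \eqref{1.15} with $A=(0,\dots,0,p-1)$, so that $D=n+p-1$ and $p^{*}=Dp/(D-p)$, bounds the last integral below by $\big(S_p(D)\big)^{p}\big(\int x_n^{p-1}|\psi|^{p^{*}}\big)^{p/p^{*}}$. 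Finally I would substitute back $\psi=\rho^{1/p}\phi$, using $|\psi|^{p^{*}}=\rho^{\,p^{*}/p}|\phi|^{p^{*}}=(x_{n-1}^2+x_n^2)^{D/(2(D-p))}|\phi|^{Dp/(D-p)}$ and $p/p^{*}=(D-p)/D$, which reproduces exactly the stated remainder with $C=\frac{(S_p(D))^{p}}{(2^{p-1}-1)R}$. Beyond this, only routine bookkeeping of the powers of $\rho$ remains.
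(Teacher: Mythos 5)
Your argument is correct and follows essentially the same route as the paper: the substitution $\phi=\rho^{\alpha}\psi$, the convexity inequality \eqref{3.4}, integration by parts with the optimal exponent (your $\alpha=-1/p$ for the planar radius is the paper's $\alpha=-1/(2p)$ for its squared radius, and both make the cross term vanish), the bound $\rho^{-1}\ge 1/R$ on the ball, and the Cabr\'e--Ros-Oton inequality \eqref{1.15} with $A=(0,\dots,0,p-1)$. The only differences are notational (radius versus squared radius, and your clean divergence identity for $x_n^{p-1}\rho^{k}V$ in place of the paper's \eqref{4.6}), and your remark that $\rho\ge x_n>0$ on the support of $\phi$ usefully justifies the integrations by parts that the paper leaves implicit.
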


\begin{proof}
Let $\phi \in C_0^{\infty} (\Omega)$ and define  $\phi=\rho^{\alpha}\psi$  where $\alpha < 0$ and $\rho=(x_{n-1}^2+x_n^2)$. Then we have
\[|\nabla \phi|^p=|\alpha \rho^{\alpha-1}\psi \nabla \rho +\rho^{\alpha}\nabla \psi|^p.\]
We use the convexity inequality (\ref{3.4}) with $a=\alpha \rho^{\alpha-1}\psi\nabla \rho$ and $b=\rho^{\alpha}\nabla \psi$  to get
\begin{equation}\label{4.4}
\begin{aligned}
|\nabla\phi|^p \ge & |\alpha|^p \rho^{p\alpha-p}|\nabla\rho|^p |\psi|^p \\ &+ \alpha |\alpha|^{p-2}\rho^{p\alpha-p+1}|\nabla\rho|^{p-2}\nabla \rho\cdot \nabla(|\psi|^p)  \\&+\frac{1}{2^{p-1}-1}\rho^{p\alpha}|\nabla \psi|^p.
\end{aligned}
\end{equation}
Multiplying both sides of $(\ref{4.4})$ by $x_n^{p-1}$ and integrating by parts over the set $\Omega$ gives,
\begin{equation}\label{4.5}
\begin{aligned}
\int_{\Omega} x_n^{p-1}|\nabla\phi|^p dx \ge &  |\alpha|^p  \int_{\Omega} x_n^{p-1} \rho^{p\alpha-p}|\nabla \rho|^p |\psi|^p dx\\&-\alpha
|\alpha|^{p-2}\int_{\Omega}\text{div}(x_n^{p-1} \rho^{\alpha p-p+1} |\nabla \rho|^{p-2}\nabla\rho ) |\psi|^p dx \\&+\frac{1}{2^{p-1}-1}\int_{\Omega}
x_n^{p-1}\rho^{p\alpha}|\nabla\psi|^p dx.
\end{aligned}
\end{equation}
A direct computation shows that
\begin{equation}\label{4.6}
\begin{aligned}
\text{div}(x_n^{p-1} \rho^{\alpha p-p+1} |\nabla \rho|^{p-2}\nabla\rho )=2^{p-1}x_n^{p-1}\rho^{\alpha p-\frac{p}{2}}(1+2\alpha p)=0
\end{aligned}
\end{equation}
when $\alpha=-\frac{1}{2p}$.
Substituting $(\ref{4.6})$ into $(\ref{4.5})$ and writing $\psi=(x_{n-1}^2+x_n^2)^{\frac{1}{2p}}\phi$, we get
\begin{equation}\label{4.7}
\begin{aligned}
\int_{\Omega} x_n^{p-1}|\nabla \phi|^p dx \ge &\frac{1}{p^p} \int_{\Omega}  \frac{x_n^{p-1}}{(x_{n-1}^2+x_n^2)^{p/2}}|\phi|^p dx\\&+ \frac{1}{2^{p-1}-1} \int_{\Omega} x_n^{p-1} \rho^{-1/2}|\nabla\psi|^p dx.
\end{aligned}
\end{equation}
By dropping the nonnegative last term in \eqref{4.7}, we obtain the desired inequality

\begin{equation*}
\begin{aligned}
\int_{\Omega} x_n^{p-1}|\nabla \phi|^p dx \ge \frac{1}{p^p} \int_{\Omega}  \frac{x_n^{p-1}}{(x_{n-1}^2+x_n^2)^{p/2}}|\phi|^p dx.
\end{aligned}
\end{equation*}

Now let us look at the second part of Theorem \ref{Theorem 4.1}. The inequality \eqref{4.7} is valid  for $\phi \in C_0^{\infty} (B_R\cap \mathbb{R}_+^n)$. Since $\rho^{-1/2}=(x_{n-1}^2+x_n^2)^{-1/2}\ge (x_1^2+\dots+x_n^2)^{-1/2}= \frac{1}{R}$, one can write
	\begin{equation}\label{4.8}
	\begin{aligned}
	\int_{B_R\cap \mathbb{R}_+^n} x_n^{p-1}|\nabla \phi|^p dx \ge &\frac{1}{p^p} \int_{B_R\cap \mathbb{R}_+^n}  \frac{x_n^{p-1}}{(x_{n-1}^2+x_n^2)^{p/2}}|\phi|^p dx\\&+ \frac{1}{(2^{p-1}-1)R} \int_{B_R\cap \mathbb{R}_+^n} x_n^{p-1} |\nabla\psi|^p dx.
	\end{aligned}
	\end{equation}
	We now apply weighted Sobolev inequality (\ref{1.15}) (see also \cite{Nguyen}, Proposition 1.1)  to the second integral term on the right hand side of \eqref{4.9}  and obtain
	\begin{equation}\label{4.9}
	\int_{B_R\cap \mathbb{R}_+^n} x_n^{p-1} |\nabla\psi|^p dx\ge \left(S_p(D)\right)^{p} \left(\int_{B_R\cap \mathbb{R}_+^n}x_n^{p-1}|\psi|^{\frac{Dp}{D-p}}dx\right)^{\frac{D-p}{D}}.
	\end{equation}
Substituting (\ref{4.9}) into (\ref{4.8}) and taking back substitution $\psi=(x_{n-1}^2+x_n^2)^{\frac{1}{2p}}\phi$, we obtain the desired inequality,
	\begin{equation*}
	\begin{aligned}
	\int_{B_R\cap \mathbb{R}_+^n} x_n^{p-1}|\nabla \phi|^pdx \ge& \frac{1}{p^p}\int_{B_R\cap \mathbb{R}_+^n} \frac{x_n^{p-1}}{(x_{n-1}^2+x_n^2)^{p/2}}|\phi|^pdx\\&+\frac{\left(S_p(D)\right)^p}{(2^{p-1}-1)R}\left(\int_{B_R\cap \mathbb{R}_+^n}x_n^{p-1}(x_{n-1}^2+x_n^2)^{\frac{D}{2(D-p)}}|\phi|^{\frac{Dp}{D-p}}dx\right)^{\frac{D-p}{D}}.
	\end{aligned}	
	\end{equation*}
\end{proof}
\smallskip
Furthermore, by applying the Maz'ya-Sobolev inequality \eqref{4.1} in \eqref{4.8}, we obtain the following  Maz'ya type inequality, which has a weighted  Sobolev remainder term.
\begin{corollary}\label{Corollary}
	Let $2\le p <n+p-1$.  Then for all $\phi \in C_0^{\infty}(B_R\cap \mathbb{R}_+^n)$, we have

	\begin{equation}\label{4.10}
	\begin{aligned}
	\int_{B_R\cap \mathbb{R}_+^n} x_n^{p-1}|\nabla \phi|^pdx \ge& \frac{1}{p^p}\int_{B_R\cap \mathbb{R}_+^n}x_n^{p-1} \frac{|\phi|^p}{(x_{n-1}^2+x_n^2)^{p/2}}dx\\&+\frac{C^p}{(2^p-1)R}\left(\int_{B_R\cap \mathbb{R}_+^n}\frac{x_n^{(p-1)(p-s)/p}}{(x_{n-1}^2+x_n^2)^{\frac{s}{2p}}}|\phi|^{p^{*}(s)}dx\right)^{\frac{p}{p^{*}(s)}},
	\end{aligned}
	\end{equation}
	where  $0\le s\le p$, $D=n+p-1$,  $C=\frac{\left(S_p(D)\right)^{\frac{D(p-s)}{p(D-s)}}}{(2p)^{\frac{s}{p^{*}(s)}}}$ and  $p^{*}(s)=p\left(\frac{D-s}{D-p}\right)$.
\end{corollary}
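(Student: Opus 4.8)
The plan is to resume from inequality \eqref{4.8}, which is established in the course of proving Theorem \ref{Theorem 4.1} and holds for every $\phi\in C_0^{\infty}(B_R\cap\mathbb{R}_+^n)$ with $\psi=(x_{n-1}^2+x_n^2)^{\frac{1}{2p}}\phi$. Its right-hand side already contains the Hardy-type main term $\frac{1}{p^p}\int x_n^{p-1}(x_{n-1}^2+x_n^2)^{-p/2}|\phi|^p\,dx$ together with the leftover term $\frac{1}{(2^{p-1}-1)R}\int_{B_R\cap\mathbb{R}_+^n}x_n^{p-1}|\nabla\psi|^p\,dx$. The whole task is thus to bound this leftover term from below by a weighted Sobolev norm, and the Maz'ya--Sobolev inequality \eqref{4.1} is tailored for precisely this.

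First I would check that $\psi$ is an admissible test function for \eqref{4.1}: since $x_n>0$ throughout $\mathbb{R}_+^n$, one has $x_{n-1}^2+x_n^2\ge x_n^2>0$ on the support of $\phi$, so the weight $(x_{n-1}^2+x_n^2)^{\frac{1}{2p}}$ is smooth and strictly positive there and $\psi\in C_0^{\infty}(B_R\cap\mathbb{R}_+^n)$. Applying \eqref{4.1} to $\psi$ and raising both sides to the power $p$ gives
\begin{equation*}
\int_{B_R\cap\mathbb{R}_+^n}x_n^{p-1}|\nabla\psi|^p\,dx\ge C^p\left(\int_{B_R\cap\mathbb{R}_+^n}\frac{x_n^{(p-1)(p-s)/p}}{(x_{n-1}^2+x_n^2)^{s/2p}}|\psi|^{p^{*}(s)}\,dx\right)^{\frac{p}{p^{*}(s)}},
\end{equation*}
with $C=\frac{(S_p(D))^{\frac{D(p-s)}{p(D-s)}}}{(2p)^{s/p^{*}(s)}}$ as in the Lemma. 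Inserting this into the leftover term of \eqref{4.8} produces the coefficient $\frac{C^p}{(2^{p-1}-1)R}$ in front of the Sobolev remainder, while leaving the Hardy-type main term untouched.

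The final step is the back-substitution $\psi=(x_{n-1}^2+x_n^2)^{\frac{1}{2p}}\phi$, so that $|\psi|^{p^{*}(s)}=(x_{n-1}^2+x_n^2)^{\frac{p^{*}(s)}{2p}}|\phi|^{p^{*}(s)}$. I expect this exponent bookkeeping to be the only delicate point: combining the factor $(x_{n-1}^2+x_n^2)^{\frac{p^{*}(s)}{2p}}$ with the weight $(x_{n-1}^2+x_n^2)^{-s/2p}$ already present, and using the identity $p^{*}(s)-s=\frac{D(p-s)}{D-p}$, collapses the two powers into the single exponent $\frac{D(p-s)}{2p(D-p)}$. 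This is the natural generalization of the $s=0$ computation already carried out in \eqref{4.3}, where the surviving exponent is $\frac{D}{2(D-p)}$. Keeping both nonnegative terms then yields the asserted inequality; it remains only to confirm that the resulting weight $(x_{n-1}^2+x_n^2)^{\frac{D(p-s)}{2p(D-p)}}$ and the normalizing factor $\frac{C^p}{(2^{p-1}-1)R}$ match those recorded in \eqref{4.10}.
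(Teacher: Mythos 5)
Your strategy is exactly the paper's: the authors' entire proof is the remark that \eqref{4.10} follows ``by applying the Maz'ya--Sobolev inequality \eqref{4.1} in \eqref{4.8}'', and you carry out precisely that plan, applying \eqref{4.1} to $\psi$ and then back-substituting $\psi=(x_{n-1}^2+x_n^2)^{\frac{1}{2p}}\phi$. Your exponent bookkeeping is also right: from $p^{*}(s)-s=\frac{D(p-s)}{D-p}$ the remainder you obtain carries the weight $x_n^{(p-1)(p-s)/p}(x_{n-1}^2+x_n^2)^{\frac{D(p-s)}{2p(D-p)}}$, consistent with the $s=0$ case \eqref{4.3}. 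However, the one step you defer --- ``it remains only to confirm that the resulting weight \dots{} match those recorded in \eqref{4.10}'' --- is exactly where you should not stop, because they do not match: the printed corollary keeps the weight $(x_{n-1}^2+x_n^2)^{-s/(2p)}$ of \eqref{4.1} together with $|\phi|^{p^{*}(s)}$, i.e.\ it omits the factor $(x_{n-1}^2+x_n^2)^{\frac{p^{*}(s)}{2p}}$ produced by the back-substitution, and it writes $2^{p}-1$ in the denominator where \eqref{4.8} supplies $2^{p-1}-1$. The omitted factor has a positive exponent and tends to zero near the edge $\{x_{n-1}=x_n=0\}$, so it cannot be absorbed into a constant uniform over all admissible $\phi$; hence \eqref{4.10} as printed does not follow from this argument and is evidently a misprint (the remainder should either be stated in terms of $\psi$ or carry the exponent $\frac{D(p-s)}{2p(D-p)}$ you computed). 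In short, your derivation is correct and reproduces the paper's intended proof; finish the last check and record the corrected weight and constant rather than the ones in the statement.
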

\smallskip

On the other hand, by H\"older's inequality with the conjugate exponents $p/s$, $p/p-s$, Maz'ya type inequality \eqref{4.2} and the Sobolev inequality \eqref{1.15}, we have the following Maz'ya-Sobolev type  inequality.

\begin{corollary}
	Let $2\le p<n+p-1$ and $\phi \in C_0^{\infty}(\Omega)$. Then we have,
	\begin{equation}\label{4.11}
	\left(\int_{\Omega} x_n^{p-1} |\nabla \phi|^p dx\right)^{\frac{1}{p}} \geq C\left(\int_{\Omega} \frac{x_n^{p-1}}{(x_{n-1}^2+x_n^2)^{\frac{s}{2}}}|\phi|^{p^{*}(s)} dx\right)^{\frac{1}{p^{*}(s)}},
	\end{equation}
	where  $0\le s\le p$, $D=n+p-1$,  $p^{*}(s)=p\left(\frac{D-s}{D-p}\right)$ and
	$C=\frac{\left(S_p(D)\right)^{\frac{D(p-s)}{p(D-s)}}}{p^\frac{s}{p^{*}(s)}}$.
\end{corollary}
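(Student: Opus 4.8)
The plan is to interpolate between the Maz'ya type inequality \eqref{4.2} and the weighted Sobolev inequality \eqref{1.15}, exactly in the spirit of the corollary following Theorem \ref{Theorem 2.1}. I would apply \eqref{1.15} with the monomial weight $A_1=\dots=A_{n-1}=0$, $A_n=p-1$, so that $D=n+p-1$ and $x^A=x_n^{p-1}$ is precisely the weight occurring on both sides of \eqref{4.2}; this is the same specialization already used in the Lemma opening this section. Writing $I:=\int_{\Omega}x_n^{p-1}|\nabla\phi|^p\,dx$, the two ingredients then read $\int_{\Omega}\frac{x_n^{p-1}}{(x_{n-1}^2+x_n^2)^{p/2}}|\phi|^p\,dx\le p^p\,I$ from \eqref{4.2}, and $\int_{\Omega}x_n^{p-1}|\phi|^{p^*}\,dx\le\left(S_p(D)\right)^{-p^*}I^{p^*/p}$ from \eqref{1.15}, where $p^*=\frac{Dp}{D-p}=p^*(0)$.

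The key step is the correct factorization of the target integrand. I would split
\[\frac{x_n^{p-1}}{(x_{n-1}^2+x_n^2)^{s/2}}|\phi|^{p^*(s)}=\left(\frac{x_n^{p-1}}{(x_{n-1}^2+x_n^2)^{p/2}}|\phi|^{p}\right)^{\frac{s}{p}}\left(x_n^{p-1}|\phi|^{p^*}\right)^{\frac{p-s}{p}}\]
and verify that the exponents balance: the powers of $x_n$ combine to $(p-1)\frac{s}{p}+(p-1)\frac{p-s}{p}=p-1$, the power of $(x_{n-1}^2+x_n^2)$ is $-\frac{s}{2}$, and the power of $|\phi|$ is $s+p^*\frac{p-s}{p}=p^*(s)$, using $p^*(s)=p\frac{D-s}{D-p}$. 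With this identity in hand, H\"older's inequality with conjugate exponents $\frac{p}{s}$ and $\frac{p}{p-s}$ gives
\[\int_{\Omega}\frac{x_n^{p-1}}{(x_{n-1}^2+x_n^2)^{s/2}}|\phi|^{p^*(s)}\,dx\le\left(\int_{\Omega}\frac{x_n^{p-1}}{(x_{n-1}^2+x_n^2)^{p/2}}|\phi|^p\,dx\right)^{\frac{s}{p}}\left(\int_{\Omega}x_n^{p-1}|\phi|^{p^*}\,dx\right)^{\frac{p-s}{p}}.\]

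Then I would insert the two bounds above to obtain that the right-hand side is $\le p^{s}\left(S_p(D)\right)^{-p^*\frac{p-s}{p}}I^{\frac{s}{p}+\frac{p^*(p-s)}{p^2}}$. A short computation shows that the exponent of $I$ equals $\frac{D-s}{D-p}=\frac{p^*(s)}{p}$, so raising both sides to the power $\frac{1}{p^*(s)}$ isolates $I^{1/p}$ and produces the stated constant $C=\frac{\left(S_p(D)\right)^{D(p-s)/(p(D-s))}}{p^{s/p^*(s)}}$, after simplifying $\frac{p^*}{p^*(s)}=\frac{D}{D-s}$.

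The routine but essential obstacle is the exponent bookkeeping in the last two steps; it must be carried out carefully so that the weights and the power of $|\phi|$ match exactly and the power of $I$ collapses to $p^*(s)/p$. The only genuine subtlety is that the H\"older exponents $\frac{p}{s}$ and $\frac{p}{p-s}$ degenerate at the endpoints $s=0$ and $s=p$, where the step is not literally applicable; these cases should be recorded separately as the trivial reductions, $s=0$ returning the Sobolev inequality \eqref{1.15} (with $C=S_p(D)$) and $s=p$ returning the Maz'ya type inequality \eqref{4.2} (with $C=1/p$).
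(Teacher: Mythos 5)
Your proposal is correct and follows essentially the same route as the paper, whose proof is precisely the one-line recipe you implement: H\"older's inequality with conjugate exponents $p/s$ and $p/(p-s)$ applied to the factorization you write down, followed by the Maz'ya type inequality \eqref{4.2} and the Sobolev inequality \eqref{1.15} with $A_1=\dots=A_{n-1}=0$, $A_n=p-1$. Your exponent bookkeeping checks out and reproduces the stated constant, and your separate treatment of the endpoints $s=0$ and $s=p$ is a harmless (correct) addition the paper leaves implicit.
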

\smallskip

\begin{remark}
	Note that one can obtain further Maz'ya type inequalities with remainder terms by using the method in Theorem \ref{4.1} and the  inequality  \eqref{4.11}.
\end{remark}
\smallskip

We now present the second main theorem of this section, which allows us to construct additional Maz'ya type inequalities with remainder terms.

\begin{theorem}\label{Theorem 4.2}
Let  $p\ge 2$ and $\delta \in C^2(\Omega)$ be nonnegative function such that \[-\emph{div}( x_n^{p-1} (x_{n-1}^2+x_n^2)^{-1/2}\frac{|\nabla\delta|^{p-2}}{\delta^{p-2}}\nabla\delta)\geq 0,\] in the sense of distributions.
Then for all  $\phi \in C_0^{\infty}(\Omega)$, we have
\begin{equation}\label{4.12}
\begin{aligned}\int_{\Omega}x_n^{p-1} |\nabla \phi|^p dx \ge& \frac{1}{p^p} \int_{\Omega} \frac{x_n^{p-1}}{(x_{n-1}^2+x_n^2)^{p/2}}|\phi|^p  dx\\&+ \left(\frac{1}{2^{p-1}-1}\right)\frac{1}{p^p} \int_{\Omega} x_n^{p-1} \frac{|\nabla \delta|^p}{\delta ^p} |\phi|^p dx.\end{aligned}
\end{equation}
\end{theorem}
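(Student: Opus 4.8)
The plan is to mirror the argument of Theorem~\ref{Theorem 3.3}, with the orthant weight $(x_1\cdots x_n)$ replaced throughout by the Maz'ya weight $x_n^{p-1}\rho^{-1/2}$, where $\rho=x_{n-1}^2+x_n^2$. First I would recall from the proof of Theorem~\ref{Theorem 4.1} that, writing $\psi=(x_{n-1}^2+x_n^2)^{\frac{1}{2p}}\phi$, the intermediate inequality \eqref{4.7} already supplies the first remainder, namely
\[
\int_{\Omega} x_n^{p-1}|\nabla\phi|^p\,dx \ge \frac{1}{p^p}\int_{\Omega}\frac{x_n^{p-1}}{(x_{n-1}^2+x_n^2)^{p/2}}|\phi|^p\,dx + \frac{1}{2^{p-1}-1}\int_{\Omega} x_n^{p-1}\rho^{-1/2}|\nabla\psi|^p\,dx .
\]
So the whole task reduces to bounding the last integral below by $\frac{1}{p^p}\int_\Omega x_n^{p-1}\frac{|\nabla\delta|^p}{\delta^p}|\phi|^p\,dx$.

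Next I would set $\varphi:=\delta^{-1/p}\psi$, so that $\nabla\psi=\tfrac{1}{p}\delta^{\frac{1}{p}-1}\varphi\,\nabla\delta+\delta^{1/p}\nabla\varphi$, and apply the convexity inequality \eqref{3.4} (discarding the nonnegative $c(p)|b|^p$ term) with $a=\tfrac{1}{p}\delta^{\frac{1}{p}-1}\varphi\,\nabla\delta$ and $b=\delta^{1/p}\nabla\varphi$. This yields, exactly as in \eqref{3.15},
\[
|\nabla\psi|^p \ge \frac{1}{p^p}\delta^{1-p}|\nabla\delta|^p|\varphi|^p + p^{1-p}\frac{|\nabla\delta|^{p-2}}{\delta^{p-2}}\nabla\delta\cdot\nabla(|\varphi|^p).
\]
Multiplying by $x_n^{p-1}\rho^{-1/2}$, integrating over $\Omega$, and integrating by parts in the cross term converts it into $-p^{1-p}\int_\Omega \text{div}\big(x_n^{p-1}\rho^{-1/2}\tfrac{|\nabla\delta|^{p-2}}{\delta^{p-2}}\nabla\delta\big)|\varphi|^p\,dx$. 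Here I would invoke the standing hypothesis that this divergence is $\le 0$ in the distributional sense, so that the cross term is nonnegative and can simply be dropped.

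Finally I would undo the substitutions. Since $\varphi=\delta^{-1/p}\rho^{\frac{1}{2p}}\phi$, we have $|\varphi|^p=\delta^{-1}\rho^{1/2}|\phi|^p$; substituting this into the surviving term, the factor $\rho^{-1/2}$ from the weight and the factor $\rho^{1/2}$ from $|\varphi|^p$ cancel, leaving $\frac{1}{p^p}\int_\Omega x_n^{p-1}\frac{|\nabla\delta|^p}{\delta^p}|\phi|^p\,dx$. Inserting this lower bound for $\int_\Omega x_n^{p-1}\rho^{-1/2}|\nabla\psi|^p\,dx$ back into the displayed inequality from the first step produces \eqref{4.12}.

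The computation is routine once the structure is fixed; the one point that genuinely needs care is the bookkeeping of the powers of $\rho=x_{n-1}^2+x_n^2$, together with the verification that the operator appearing after integration by parts is precisely the one assumed to be sign-definite in the hypothesis, namely $\text{div}\big(x_n^{p-1}(x_{n-1}^2+x_n^2)^{-1/2}\tfrac{|\nabla\delta|^{p-2}}{\delta^{p-2}}\nabla\delta\big)$. The cancellation of the $\rho^{\pm1/2}$ factors is exactly what makes this hypothesis the natural one and what produces the clean $|\nabla\delta|^p/\delta^p$ weight in the remainder term.
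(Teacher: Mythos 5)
Your proposal is correct and follows essentially the same route as the paper: start from the intermediate inequality \eqref{4.7}, substitute $\varphi=\delta^{-1/p}\psi$, apply the convexity inequality \eqref{3.4}, integrate by parts, and drop the cross term using the sign hypothesis on the divergence, exactly as in the paper's \eqref{4.13}--\eqref{4.16}. Your bookkeeping of the $\rho^{\pm 1/2}$ factors in the back-substitution is also the same cancellation the paper relies on.
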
	
\begin{proof}
Let $\phi \in C_0^{\infty}(\Omega)$ and define $\psi=(x_{n-1}^2+x_n^2)^{\frac{-1}{2p}}\phi$. Using the same argument as in Theorem \ref{Theorem 4.1}, we have the following inequality
\begin{equation}\label{4.13}
\begin{aligned}
\int_{\Omega} x_n^{p-1}|\nabla \phi|^p dx \ge &\frac{1}{p^p} \int_{\Omega}  \frac{x_n^{p-1}}{(x_{n-1}^2+x_n^2)^{p/2}}|\phi|^p dx\\&+ \frac{1}{2^{p-1}-1} \int_{\Omega} x_n^{p-1} \rho^{-1/2}|\nabla\psi|^p dx.
\end{aligned}
\end{equation}
Now we focus on the second term on the right-hand side of this inequality.  Let us define a new variable $\varphi (x):=\delta(x)^{-1/p}\psi(x)$. By using the convexity inequality (\ref{3.4}), one has
\[|\nabla \psi|^p \ge \frac{|\varphi|^p}{p^p}\delta^{1-p}|\nabla
\delta|^p+p^{1-p}|\nabla\delta|^{p-2} \delta^{2-p}\nabla\delta \cdot \nabla(|\varphi|^p).\]
Hence,
\begin{equation}\label{4.14}
\begin{aligned}\int_\Omega x_n^{p-1} \rho^{-1/2} |\nabla \psi|^p dx \ge&
\frac{1}{p^p}\int_\Omega x_n^{p-1} \rho^{-1/2}  |\nabla \delta|^p\delta^{1-p} |\varphi|^p dx\\&+p^{1-p}\int_\Omega x_n^{p-1} \rho^{-1/2}  \frac{|\nabla\delta|^{p-2}}{\delta^{p-2}} \nabla\delta \cdot \nabla (|\varphi|^p)  dx.\end{aligned}\end{equation}
Applying integration by parts to the second term on the right hand side of (\ref{4.14}), then using the differential inequality $-\text{div}( x_n^{p-1} (x_{n-1}^2+x_n^2)^{-1/2}\frac{|\nabla\delta|^{p-2}}{\delta^{p-2}}\nabla\delta)\geq 0$ and taking back substitution $\varphi=\delta^{-1/p}\psi$, we get
\begin{equation}\label{4.15}
\int_\Omega x_n^{p-1} \rho^{-1/2} |\nabla\psi|^pdx \ge
\frac{1}{p^p}\int_\Omega x_n^{p-1} (x_{n-1}^2+x_n^2)^{-1/2} \frac{|\nabla \delta|^p}{\delta^p}
|\psi|^pdx.\end{equation}
Substituting (\ref{4.15}) into (\ref{4.13}) gives
\begin{equation}\label{4.16}
\begin{aligned}
\int_{\Omega} x_n^{p-1}|\nabla \phi|^p dx \ge & \frac{1}{p^p} \int_{\Omega}  \frac{x_n^{p-1}}{(x_{n-1}^2+x_n^2)^{p/2}}|\phi|^p dx\\&+
\left(\frac{1}{2^{p-1}-1}\right)\frac{1}{p^p}\int_\Omega x_n^{p-1} (x_{n-1}^2+x_n^2)^{-1/2} \frac{|\nabla \delta|^p}{\delta^p}
|\psi|^pdx.
\end{aligned}
\end{equation}
Now, taking back substitution $\psi=(x_{n-1}^2+x_n^2)^{\frac{1}{2p}}\phi$,
we obtain the desired inequality
\begin{equation*}
\begin{aligned}\int_{\Omega}x_n^{p-1} |\nabla \phi|^p dx \ge& \frac{1}{p^p} \int_{\Omega} \frac{x_n^{p-1}}{(x_{n-1}^2+x_n^2)^{p/2}} |\phi|^p dx\\&+ \left(\frac{1}{2^{p-1}-1}\right)\frac{1}{p^p} \int_{\Omega} x_n^{p-1} \frac{|\nabla \delta|^p}{\delta ^p} |\phi|^p dx.\end{aligned}
\end{equation*}
\end{proof}
\smallskip

\subsection{Applications of Theorem \ref{Theorem 4.2}} It is worth noting that one can obtain as many Maz'ya type inequalities with remainder terms as one can find a function $\delta$ satisfying the hypothesis in Theorem \ref{Theorem 4.2}. Before proceeding, we would like to mention the following inequalities were given a second name based on the remainder terms we obtained.  Let us begin by considering the function,
\[\delta:=e^{-(2^{p-1}-1)^{\frac{1}{p}}n|x|}.\]
After some computations, we have the following Maz'ya-Poincar\'{e} type inequality.

\begin{corollary} \label{Corollary 4.3} Let $2 < p < n$, $R= \frac{n+p-3}{n(2^{p-1}-1)^{1/p}}$ and $\phi \in C_0^{\infty}(B_R\cap \mathbb{R}_+^n)$. Then  we have,
\begin{equation}\label{4.17}
\begin{aligned}
\int_{B_R\cap \mathbb{R}_+^n}x_n^{p-1} |\nabla \phi|^p dx \ge& \frac{1}{p^p} \int_{B_R\cap \mathbb{R}_+^n} \frac{x_n^{p-1}}{(x_{n-1}^2+x_n^2)^{p/2}}|\phi|^p  dx\\&+ \left(\frac{n}{p}\right)^p\int_{B_R\cap \mathbb{R}_+^n} x_n^{p-1}|\phi|^p dx.
\end{aligned}
\end{equation}
	
\end{corollary}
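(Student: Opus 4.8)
The plan is to apply Theorem \ref{Theorem 4.2} on the domain $B_R\cap\mathbb{R}_+^n$ with the explicit choice $\delta(x)=e^{-c|x|}$, where $c:=n(2^{p-1}-1)^{1/p}$. Since $\phi\in C_0^\infty(B_R\cap\mathbb{R}_+^n)$ has support bounded away from $\{x_n=0\}$ and in particular away from the origin, $\delta$ is $C^2$ (indeed smooth) on the relevant set, so the genuinely substantive point is to verify the differential inequality in the hypothesis of Theorem \ref{Theorem 4.2}; once that is in hand, the conclusion \eqref{4.12} specializes by a direct substitution to the claimed inequality \eqref{4.17}.

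First I would record the two elementary identities that make this $\delta$ convenient. From $\nabla\delta=-c\,\delta\,\frac{x}{|x|}$ one gets $|\nabla\delta|=c\,\delta$, hence both $\frac{|\nabla\delta|^{p}}{\delta^{p}}=c^{p}$ and $\frac{|\nabla\delta|^{p-2}}{\delta^{p-2}}=c^{p-2}$ are constant. With this the remainder integrand in \eqref{4.12} collapses to
\[
\left(\frac{1}{2^{p-1}-1}\right)\frac{1}{p^p}\,x_n^{p-1}\,c^{p}=\left(\frac{1}{2^{p-1}-1}\right)\frac{1}{p^p}\,x_n^{p-1}\,n^{p}(2^{p-1}-1)=\left(\frac{n}{p}\right)^{p}x_n^{p-1},
\]
which is exactly the Poincar\'e remainder appearing in \eqref{4.17}, so the only thing left to justify is the admissibility of $\delta$.

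The core of the argument is checking $-\mathrm{div}\,V\ge 0$, where
\[
V:=x_n^{p-1}(x_{n-1}^2+x_n^2)^{-1/2}\frac{|\nabla\delta|^{p-2}}{\delta^{p-2}}\nabla\delta=-c^{p-1}\,w(x)\,e^{-c|x|}\,\frac{x}{|x|},\qquad w(x):=x_n^{p-1}(x_{n-1}^2+x_n^2)^{-1/2}.
\]
The key structural observation is that $w$ is homogeneous of degree $(p-1)-1=p-2$, so by Euler's relation its radial derivative is $\partial_r w=\frac{p-2}{r}w$ with $r=|x|$. Writing $V=-c^{p-1}h\,\frac{x}{|x|}$ with $h=w\,e^{-cr}$ and using $\mathrm{div}(x/|x|)=(n-1)/r$, I obtain
\[
\mathrm{div}\,V=-c^{p-1}\Big(\partial_r h+\tfrac{n-1}{r}h\Big)=-c^{p-1}\,w\,e^{-cr}\Big(\tfrac{n+p-3}{r}-c\Big).
\]
Since $c^{p-1}$, $w$ and $e^{-cr}$ are all positive on the support of $\phi$, the sign condition $-\mathrm{div}\,V\ge 0$ is equivalent to $\frac{n+p-3}{r}-c\ge 0$, i.e. to $r\le \frac{n+p-3}{c}=\frac{n+p-3}{n(2^{p-1}-1)^{1/p}}=R$. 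Thus $\delta$ satisfies the hypothesis of Theorem \ref{Theorem 4.2} throughout $B_R\cap\mathbb{R}_+^n$, and feeding the remainder value computed above into \eqref{4.12} yields \eqref{4.17}.

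The only delicate point I anticipate is the divergence computation: one must combine the exponential radial factor with the homogeneity of the \emph{non-radial} weight $w$ correctly, and it is precisely the threshold $r=R$ at which the inequality degenerates into an equality that pins down the stated radius $R$. Everything else is a routine substitution into Theorem \ref{Theorem 4.2}.
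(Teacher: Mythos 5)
Your proposal is correct and follows exactly the route the paper intends: the paper merely says ``after some computations'' that Theorem \ref{Theorem 4.2} applied with $\delta=e^{-(2^{p-1}-1)^{1/p}n|x|}$ yields \eqref{4.17}, and your divergence computation (using the homogeneity of degree $p-2$ of $x_n^{p-1}(x_{n-1}^2+x_n^2)^{-1/2}$ together with $\operatorname{div}(x/|x|)=(n-1)/r$) is precisely the verification that pins down the radius $R=\frac{n+p-3}{n(2^{p-1}-1)^{1/p}}$ and the constant $(n/p)^p$.
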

\smallskip

Another application of Theorem \ref{Theorem 4.2} with the special function
\[\delta:= \left(\ln\frac{1}{|x|}\right)^{(2^{p-1}-1)^{\frac{1}{p}}(p-1)},\]
gives the following Maz'ya-Leray type inequality.
\begin{corollary}\label{Corollary 4.4}
Let $2 < p < n$, $R = e^{\frac{(1-p)((2^{p-1}-1)^{1/p}-1)}{n-2}}$ and $\phi \in C_0^{\infty}(B_R\cap \mathbb{R}_+^n)$. Then we have,
\begin{equation}\label{4.18}
\begin{aligned}\int_{B_R\cap \mathbb{R}_+^n}x_n^{p-1} |\nabla \phi|^p dx \ge& \frac{1}{p^p} \int_{B_R\cap \mathbb{R}_+^n} \frac{x_n^{p-1}}{(x_{n-1}^2+x_n^2)^{p/2}} |\phi|^p dx\\&+ \left(\frac{p-1}{p}\right)^p\int_{B_R\cap \mathbb{R}_+^n} x_n^{p-1}\frac{|\phi|^p}{|x|^p\left(\ln(\frac{1}{|x|})\right)^p }  dx.\end{aligned}
\end{equation}
	
\end{corollary}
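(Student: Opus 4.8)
The plan is to apply Theorem~\ref{Theorem 4.2} with the prescribed $\delta(x)=\bigl(\ln\tfrac{1}{|x|}\bigr)^{\beta}$, where I abbreviate $\beta:=(2^{p-1}-1)^{1/p}(p-1)$, and to verify its two hypotheses on $\Omega=B_R\cap\mathbb{R}_+^n$. Two checks are needed: first, the purely algebraic fact that the remainder coefficient in \eqref{4.12} simplifies to $\bigl(\tfrac{p-1}{p}\bigr)^p$; and second, the supersolution condition $-\mathrm{div}\bigl(x_n^{p-1}(x_{n-1}^2+x_n^2)^{-1/2}\tfrac{|\nabla\delta|^{p-2}}{\delta^{p-2}}\nabla\delta\bigr)\ge 0$ on $B_R\cap\mathbb{R}_+^n$. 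The first is immediate; the second is the genuine obstacle, and it is exactly what forces the radius $R=e^{(1-p)((2^{p-1}-1)^{1/p}-1)/(n-2)}$.

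For the coefficient, writing $f:=\ln\tfrac{1}{|x|}=-\ln|x|$ I would record $\nabla\delta=-\beta f^{\beta-1}\,x/|x|^2$, hence $|\nabla\delta|/\delta=\beta/(|x|\,\ln\tfrac{1}{|x|})$ on $B_1\cap\mathbb{R}_+^n$. Raising to the $p$th power gives $|\nabla\delta|^p/\delta^p=\beta^p/(|x|^p(\ln\tfrac{1}{|x|})^p)$, and since $\beta^p=(2^{p-1}-1)(p-1)^p$, the prefactor $\bigl(\tfrac{1}{2^{p-1}-1}\bigr)\tfrac{1}{p^p}$ in \eqref{4.12} cancels the factor $2^{p-1}-1$ and leaves precisely $\bigl(\tfrac{p-1}{p}\bigr)^p\,|x|^{-p}(\ln\tfrac{1}{|x|})^{-p}$, which is the weight appearing in \eqref{4.18}.

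For the differential inequality, I would compute $\tfrac{|\nabla\delta|^{p-2}}{\delta^{p-2}}\nabla\delta=-\beta^{p-1}f^{\beta-p+1}|x|^{-p}\,x$, so the field inside the divergence has the radial form $g(x)\,x$ with $g(x)=-\beta^{p-1}x_n^{p-1}(x_{n-1}^2+x_n^2)^{-1/2}f^{\beta-p+1}|x|^{-p}$. The key simplification is that every factor of $g$ except $f^{\beta-p+1}$ is homogeneous, so Euler's identity $x\cdot\nabla h=(\deg h)h$ applies to $x_n^{p-1}$ (degree $p-1$), to $(x_{n-1}^2+x_n^2)^{-1/2}$ (degree $-1$), and to $|x|^{-p}$ (degree $-p$), while the logarithmic factor gives $x\cdot\nabla f=-1$ and hence $x\cdot\nabla f^{\beta-p+1}=-(\beta-p+1)f^{\beta-p}$. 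Feeding these into $\mathrm{div}(g\,x)=x\cdot\nabla g+n\,g$, all factors recombine into a common positive multiple and I expect to obtain
\[-\mathrm{div}(g\,x)=\beta^{p-1}x_n^{p-1}(x_{n-1}^2+x_n^2)^{-1/2}f^{\beta-p+1}|x|^{-p}\Bigl[(n-2)-\tfrac{\beta-p+1}{f}\Bigr].\]

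Since the prefactor is strictly positive on $B_1\cap\mathbb{R}_+^n$, the sign is decided by the bracket. Because $p>2$ forces $(2^{p-1}-1)^{1/p}>1$, the quantity $\beta-p+1=(p-1)\bigl((2^{p-1}-1)^{1/p}-1\bigr)$ is positive, so the bracket is nonnegative exactly when $f=\ln\tfrac{1}{|x|}\ge(\beta-p+1)/(n-2)$, i.e.\ when $|x|\le e^{-(\beta-p+1)/(n-2)}=R$. Thus the hypothesis of Theorem~\ref{Theorem 4.2} holds precisely on $B_R\cap\mathbb{R}_+^n$, and combined with the coefficient computation this yields \eqref{4.18}. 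I expect the only delicate bookkeeping to be the non-homogeneous logarithmic factor — everything else is dictated by homogeneity — together with the observation that $2<p<n$ guarantees $n>2$, which is what makes the threshold, and therefore $R$, well defined.
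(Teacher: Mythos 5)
Your proposal is correct and follows exactly the route the paper intends: the paper offers no written proof beyond the remark that one applies Theorem~\ref{Theorem 4.2} with $\delta=(\ln\frac{1}{|x|})^{(2^{p-1}-1)^{1/p}(p-1)}$, and your verification of the coefficient identity $\beta^p=(2^{p-1}-1)(p-1)^p$ and of the supersolution condition (whose bracket $(n-2)-\frac{\beta-p+1}{f}$ is exactly what produces the stated radius $R$) supplies the computations the authors leave implicit. No gaps.
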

\smallskip

Finally, let us consider the following function
\[\delta:= |x|^{(2^{p-1}-1)^{\frac{1}{p}}(p-n)}.\]
Note that $\delta$ satisfies the hypotheses of Theorem \ref{Theorem 4.2} on $\Omega$ when  $\frac{n}{2}+1 \le p < n$.  Hence we have the following inequality, which includes both
Maz'ya and Hardy type inequalities.
\begin{corollary}\label{Corollary 4.5}
Let $\frac{n}{2}+1 \le p < n$ and  $\phi \in C_0^{\infty}(\Omega)$. Then we have,
\begin{equation}\label{4.19}
\begin{aligned}\int_{\Omega}x_n^{p-1} |\nabla \phi|^p dx \ge& \frac{1}{p^p} \int_{\Omega} \frac{x_n^{p-1}}{(x_{n-1}^2+x_n^2)^{p/2}} |\phi|^p dx\\&+ \left(\frac{n-p}{p}\right)^p\int_{\Omega} x_n^{p-1}\frac{|\phi|^p}{|x|^p} dx.\end{aligned}
\end{equation}
	
\end{corollary}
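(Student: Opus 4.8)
The plan is to apply Theorem \ref{Theorem 4.2} with the specific weight $\delta(x)=|x|^{(2^{p-1}-1)^{1/p}(p-n)}$, so that the whole task reduces to (i) verifying the differential inequality that is the hypothesis of that theorem and (ii) simplifying the resulting remainder coefficient. Write $\beta:=(2^{p-1}-1)^{1/p}(p-n)$ and note $\beta<0$ because $p<n$. First I would record the elementary gradient computation $\nabla\delta=\beta|x|^{\beta-2}x$, which gives $|\nabla\delta|=|\beta|\,|x|^{\beta-1}$ and hence $\dfrac{|\nabla\delta|^p}{\delta^p}=\dfrac{|\beta|^p}{|x|^p}$; this is the only place the remainder term of \eqref{4.19} will come from.

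The core step is checking the sign condition $-\mathrm{div}\bigl(x_n^{p-1}(x_{n-1}^2+x_n^2)^{-1/2}\tfrac{|\nabla\delta|^{p-2}}{\delta^{p-2}}\nabla\delta\bigr)\ge0$. Substituting the formulas above, the vector field inside the divergence equals $\beta|\beta|^{p-2}\,g(x)\,x$, where $g(x)=x_n^{p-1}(x_{n-1}^2+x_n^2)^{-1/2}|x|^{\beta-p}>0$ on $\Omega$. Since $g$ is a product of powers of $x_n$, of $\rho:=x_{n-1}^2+x_n^2$ and of $|x|$, a logarithmic-derivative computation using $x\cdot\nabla x_n=x_n$, $x\cdot\nabla\rho=2\rho$ and $x\cdot\nabla|x|=|x|$ gives $x\cdot\nabla g=(\beta-2)g$. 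Hence $\mathrm{div}(gx)=x\cdot\nabla g+n g=(n-2+\beta)g$, and because $\beta|\beta|^{p-2}=-|\beta|^{p-1}<0$ and $g>0$, the required inequality is equivalent to $n-2+\beta\ge0$, i.e. to $|\beta|=(2^{p-1}-1)^{1/p}(n-p)\le n-2$.

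This last inequality is exactly where the hypothesis $\tfrac{n}{2}+1\le p$ enters, and I expect it to be the one genuinely delicate point. I would prove it in two cheap steps: the bound $(2^{p-1}-1)^{1/p}<2$, which is equivalent to $2^{p-1}-1<2^p$ and so holds for every $p$; and the inequality $2(n-p)\le n-2$, which rearranges precisely to $p\ge\tfrac n2+1$. Combining them gives $|\beta|<2(n-p)\le n-2$, so $\delta$ satisfies the hypotheses of Theorem \ref{Theorem 4.2} on $\Omega$ (here $\delta\in C^2(\Omega)$ since $0\notin\Omega$, and as $\phi$ has compact support the distributional divergence coincides with the classical one computed above). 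Finally I would plug $\delta$ into \eqref{4.12}: the remainder coefficient becomes $\frac{1}{2^{p-1}-1}\frac{1}{p^p}\frac{|\beta|^p}{|x|^p}$, and since $|\beta|^p=(2^{p-1}-1)(n-p)^p$ the factors $2^{p-1}-1$ cancel, leaving $\bigl(\frac{n-p}{p}\bigr)^p|x|^{-p}$, which is exactly the claimed Hardy remainder in \eqref{4.19}.
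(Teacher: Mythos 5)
Your proposal is correct and follows exactly the route the paper intends: the paper simply states that the choice $\delta(x)=|x|^{(2^{p-1}-1)^{1/p}(p-n)}$ satisfies the hypotheses of Theorem \ref{Theorem 4.2} when $\tfrac n2+1\le p<n$ and reads off \eqref{4.19}, while you supply the verification (the divergence computation reducing to $n-2+\beta\ge 0$, the bounds $(2^{p-1}-1)^{1/p}<2$ and $2(n-p)\le n-2$, and the cancellation of $2^{p-1}-1$ in the remainder coefficient), all of which checks out.
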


\bibliographystyle{amsalpha}

\end{document}